\theoremstyle{plain} 
\newtheorem{thm}{Theorem}
\newtheorem{proposition}[thm]{Proposition}
\newtheorem{corollary}[thm]{Corollary}
\theoremstyle{definition}
\newtheorem{remark}[thm]{Remark}
\newtheorem{example}[thm]{Example}
\newtheorem{definition}[thm]{Definition}
\newtheorem{notation}[thm]{Notation}
\def\mbb #1{\mathbb{#1}}
\def\msf #1{\mathsf{#1}}
\def\mscr #1{\mathscr{#1}}
\def\C{\mbb{C}}
\def\CP{\mbb{CP}}
\def\R{\mbb{R}}
\def\Z{\mbb{Z}}
\def\N{\mbb{N}}
\newcommand{\Spec}{\mathrm{Spec\,}}
\newcommand{\sminus}{\smallsetminus}
\newcommand*\hg{\begingroup
%        \catcode`\,\active
%        \def ,{\mskip\pFqskip\relax}%
        \dopFq}
\def\dopFq#1#2#3{\Big(\genfrac..{0pt}{}{#1}{#2}\Big |\, #3\Big)%
        \endgroup}
\newcommand{\bmbeta}[1]{ \hat{\bm\beta}{\raisebox{2pt}{${}^{#1}$}} }
\begin{document}

\author{Martin Klime\v s}

\title{Confluence of singularities in hypergeometric systems}

\maketitle

\vskip1pt
\begin{abstract}
A system in a Birkhoff normal form with an irregular singularity of Poincaré rank 1 at the origin and a regular singularity at infinity is 
through the Borel-Laplace transform dual to a system in an Okubo form. 
Sch\"afke has showed that the Birkhoff system can also be obtained from the Okubo
system by a simple limiting procedure. 
The Okubo system comes naturally with two kinds of mixed solution bases,
both of which converge under the limit procedure to the canonical solutions of the limit Birkhoff system on sectors near the irregular singularity at the origin.
One can then define Stokes matrices of the Okubo system as connection matrices between different branches of the mixed solution bases
and use them to relate the monodromy matrices of the Okubo system to the usual Stokes matrices of the limit system at the irregular singularity.
This is illustrated on the example of confluence in the generalized hypergeometric equation.

\end{abstract}

\renewcommand{\thefootnote}{\fnsymbol{footnote}} 
\footnotetext{\emph{Key words\/}: Linear differential equations, generalized hypergeometric equation, confluence, Stokes matrices, monodromy.}     
\renewcommand{\thefootnote}{\arabic{footnote}}

\section{Introduction}

A linear differential system
\begin{equation}\label{eq:hg-1}
(s-B)\frac{dv}{ds}=(A+\rho)v, \qquad (s,v)\in\CP^1\times\C^n, 
\end{equation}
where $A,B$ are constant $n\times n$-matrices, $B$ is diagonal, $\rho\in\C$ a parameter,
is called an \emph{Okubo system}, or also a \emph{hypergeometric system}. 
Such systems appear as a natural generalization of the hypergeometric equation. 
It is known \cite{Ko2}, that every single Fuchsian differential equation can be reduced to such a system. 

The assumption that $B$ is diagonal (or semisimple) assures that the 1-form $(s-B)^{-1}\,ds$ has only simple poles (placed at the eigenvalues of $B$ and at $\infty$), 
i.e. that all the singularities of the Okubo system \eqref{eq:hg-1} are Fuchsian.

The Okubo system \eqref{eq:hg-1} appears also as a dual to a system in \emph{Birkhoff normal form}
\begin{equation}\label{eq:hg-2}
z^2\frac{d\psi}{dz}=(B+zA)\psi, \qquad (z,\psi)\in\CP^1\times\C^n, 
\end{equation} 
which has an irregular singular point at 0 and a Fuchsian singular point at $\infty$, 
through the Laplace transform
\begin{equation*}
\psi(z)=z^{-1-\rho}\int_0^\infty v(s,\rho)\,e^{-\frac{s}{z}}\,ds, \qquad |\arg s-\arg z|<\tfrac{\pi}{2}.
\end{equation*}
This fact can be used to express the Stokes and connection matrices of the Birkhoff system in terms of connection matrices and monodromies of the dual Okubo system \cite{BJL, Sch1}.

Sch\"afke \cite{Sch2} has remarked that the the system \eqref{eq:hg-2} can be also obtained from \eqref{eq:hg-1} by the following
confluence procedure:
$$\text{let}\quad s=\rho z, \quad\text{and}\quad y(z,\rho)=s^{-\rho}v(s,\rho),$$
then $y$ satisfies
\begin{equation}\label{eq:hg-3}
z(z-\frac{1}{\rho}B)\frac{dy}{dz}=(B+zA)y, 
\end{equation}  
which becomes \eqref{eq:hg-2} at the limit when $\rho\to\infty$.
 
In case of rank $n=2$ and $B$ with two distinct eigenvalues, 
this confluence procedure corresponds exactly to the confluence of the (Gauss') hypergeometric equation to the (Kummer's) confluent hypergeometric equation.

Aside from the usual local Levelt bases at each of the singularities, the Okubo system \eqref{eq:hg-1} has two other kinds of natural solution bases,
so called \emph{mixed bases} \cite{BJL, Sch1, OTY}:
The first one, called \emph{Floquet basis}, consists of the Floquet solutions (singular Levelt solutions) at different finite singularities $\lambda_j$ (eigenvalues of $B$). The other,  called \emph{co-Floquet basis}, is in a sense dual; a co-Floquet solution at a singularity $\lambda_j$ is one that is 
analytic at al other singularities $\lambda_k$, $k\neq j$.
Sch\"afke \cite{Sch2} has studied the limits of these mixed bases in the confluent family \eqref{eq:hg-3}
in the case where all the eigenvalues of  $B$ are distinct 
and has shown that they both tend to the canonical solution basis of the limit system (Borel sum of a formal fundamental solution) 
on sectors at the irregular singularity $z=0$:
the Floquet basis when $\rho\to+\infty$, and the co-Floquet basis when $\rho\to-\infty$.

This article exposes these results while extending them to a more general situation, where $B$ is allowed to have multiple eigenvalues, and $\rho$ can go to infinity along any fixed direction in one of two sectors of opening $>\pi$ covering a neighborhood of 
$\infty$ on the Riemann sphere $\CP^1$.
In an analogy with \cite{LR2, HLR} it is natural to introduce parametric \emph{Stokes matrices} of the confluent family \eqref{eq:hg-3},
as connection matrices between different branches of the Floquet (resp. co-Floquet) basis far from the origin.
These parametric Stokes matrices are closely related to the monodromy 
 of the family \eqref{eq:hg-3}:
in general, the monodromy matrices of the Floquet and co-Floquet bases
can be expressed as products of these Stokes matrices and formal monodromy matrices.
While the monodromy matrices diverge when $\rho\to\infty$
(because of their formal monodromy parts which are exponential functions of $\rho$)
these parametric Stokes matrices tend to  the usual Stokes matrices of the limit system, and can be easily obtained from them
(Proposition \ref{prop:hg-stokesmatrices} ).

These results are illustrated in Section 2 on explicit calculations in the case of the generalized hypergeometric equation, %\eqref{eq:hg-hge}, 
previously studied by  Duval \cite{Du}.
Duval considered the problem of convergence of the monodromy matrices to the Stokes matrices without separating formal monodromy part and the Stokes part. Therefore she could only consider limits when $\rho\to\pm\infty$ following a discrete set of values on which the formal monodromy part is constant.

\begin{remark}
A different confluence procedure of the type 
\begin{equation}\label{eq:hg-4}
(z^2-\epsilon)\frac{dy}{dz}=\Omega(z)y, \qquad\C\ni\epsilon\to 0,
\end{equation}  
was investigated in e.g. \cite{Pa, Gl1, Gl2, LR2, HLR, Kl1, Kl2}.
In case of $B$ having only two eigenvalues (one of which can be always shifted to 0), the confluence procedure \eqref{eq:hg-3}
can be considered as a special case of \eqref{eq:hg-4}.
In this case our perspective essentially coincides with that of \cite{LR2, HLR}.
In particular,
this includes the confluence in the Gauss' hypergeometric equations \cite{MR,Ra,Zh,LR1} and in the generalized hypergeometric equation \cite{Du}.
\end{remark}

\section{General theory}

Let the matrix $B$ be diagonal with eigenvalues $\lambda_1,\ldots,\lambda_p$ of respective multiplicities $n_1,\dots,n_p$, and let the matrix $A$ be partitioned into blocks accordingly
\begin{equation*}
B=\begin{pmatrix}
\lambda_1I_{n_1} && \\ & \ddots & \\ && \lambda_pI_{n_p}
\end{pmatrix},\qquad
A=\begin{pmatrix}
A_{11} &\ldots& A_{1p} \\ \vdots &  &\vdots \\ A_{p1} &\ldots & A_{pp} 
\end{pmatrix}.
\end{equation*}
The following \textbf{assumption} is made throughout the text:
\begin{equation}\label{eq:hg-asum}
\textit{No two eigenvalues of $A_{jj}$ differ by a non-zero integer,  $1\leq j\leq p$.}
\end{equation}

\begin{notation}
For any  $n\!\times\! n$-matrix $X$, let $(X_{ij})_{1\leq i,j\leq p}$ be its bloc-partition according to $B$, and
denote \,$X_{\cdot j}=$ the $j$-th bloc column of $X$.
\end{notation}

%When $X$ is a fundamental solution matrix of some system, $X_{\cdot j}$ will often be called simply a "solution", while it is in fact bloc of lineary independent solutions  

\goodbreak

\subsection{Fundamental solution of the limit system \eqref{eq:hg-2}}\label{sec: hg-2.1}

It is well-known (see for example \cite{Ba})
that the system \eqref{eq:hg-2} can be bloc-diagonalized by means of a formal power series transformation
$\psi=\hat T(z)\phi$, with 
$$\hat T(z)=\sum_{k=0}^{+\infty}T^{(k)}z^k, \qquad T^{(0)}=0.$$
Under the assumption \eqref{eq:hg-asum}, the formally transformed system can be given the following Birkhoff form
\begin{equation}\label{eq:hg-2formal}
z^2\frac{d\phi}{dz}=(B+zA_D)\phi, \qquad \text{with}\quad 
A_D=\begin{pmatrix}
A_{11} && \\  & \ddots & \\  && A_{pp} 
\end{pmatrix}.
\end{equation} 
Therefore the system \eqref{eq:hg-2} has a formal fundamental solution $\hat\Psi(z)$ whose $j$-th bloc-column is given by
\begin{equation*}
\hat\Psi_{\cdot j}(z)=\hat T_{\cdot j}(z) z^{A_{jj}} e^{-\frac{\lambda_j}{z}}.
\end{equation*}

While $\hat T(z)$ is in general divergent, it is Borel summable. 
More precisely each its column $\hat T_{\cdot j}(z)$ is Borel summable 
in all directions $\alpha$ with $e^{i\alpha}\R^+$ disjoint from all $\lambda_i-\lambda_j$, $i\neq j$ (such direction $\alpha$ will be called \emph{non-singular}).
Let 
\begin{equation}\label{eq:hg-U}
U_{\cdot j}(s)=\sum_{k=0}^{+\infty} \frac{T^{(k)}_{\cdot j}}{k!}s^k
\end{equation}
be the formal Borel transform of $z\,\hat T_{\cdot j}(z)$, convergent near $s=0$ and extended analytically on the universal covering of 
$\C\sminus\{\lambda_i-\lambda_j \mid 1\leq i\leq p,\ i\neq j\}$.
The matrix function $U(s)$ is a solution to linear system
\begin{equation*}
s\frac{dU}{ds}-B\frac{dU}{ds}+\frac{dU}{ds}B=AU-U\!A_D,
\end{equation*} 
with Fuchsian singularities at the points $\lambda_i-\lambda_j$ and $\infty$.
In particular, $U$ has only a moderate growth at each of the singularities.
Therefore the Borel sum of  $\hat T_{\cdot j}(z)$ in a non-singular direction $\alpha$  is  well-defined by the Laplace integral
\begin{equation*}
T_{[\alpha],\cdot j}(z)=\frac{1}{z}\int_0^{+\infty e^{i\alpha}}\!\!\! U_{\cdot j}(s)\,e^{-\frac{s}{z}} ds,
%=:\frac{1}{z}\cal L_\alpha[U_{\cdot j}](z),
\end{equation*}
which converges and is bounded for $z$ in the open half-plane bisected by $e^{i\alpha}\R^+$, 
and whose value is independent of when the direction $\alpha$ varies a bit.
In another words, the sectoral transformation $T_{[\alpha]}$ depends only on the homotopy class $[\alpha]$ of the direction $\alpha\in
\R\sminus\{\text{singular directions}\}$,
and one can consider it as defined on a sector in the $z$-plane
\begin{equation}\label{eq:hg-sector}
\mscr S_{[\alpha]}(\infty):=
\bigcup_{\alpha'\in[\alpha]}\{\Re(e^{-i\alpha'}z)>0\},\end{equation}
of opening $>\pi$.
Once a branch of $\log z$ is fixed, the system \eqref{eq:hg-2} has on each of these sectors a \emph{canonical solution basis} $\Psi_{[\alpha]}(z)$
\begin{equation}\label{eq:hg-Psi_alpha}
\Psi_{[\alpha],\cdot j}(z)=T_{[\alpha],\cdot j}(z)\, z^{A_{jj}} e^{-\frac{\lambda_j}{z}}=
\frac{1}{z}\int_0^{+\infty e^{i\alpha}}\!\!\! U_{\cdot j}(s)\,e^{-\frac{s}{z}} ds\cdot z^{A_{jj}}e^{-\frac{\lambda_j}{z}}.
\end{equation}

For every pair of non-singular directions $\alpha_1,\, \alpha_2$ there is \emph{Stokes matrix} $S_{[\alpha_1][\alpha_2]}(\infty)$
\begin{equation}\label{eq:hg-stokesmatrices}
\Psi_{[\alpha_2]}=\Psi_{[\alpha_1]}\cdot S_{[\alpha_1][\alpha_2]}(\infty)
\end{equation}
(defined by analytic continuation).
It is an easy fact that for two neighboring direction classes $[\alpha_1],\, [\alpha_2]$ the  Stokes matrix  $S_{[\alpha_1][\alpha_2]}(\infty)$ is unipotent with only non-zero off-diagonal entries at the positions $(j,i)$ corresponding to the singularity $\lambda_i-\lambda_j$ separating the direction classes $[\alpha_1],\,[\alpha_2]$.

\begin{remark}
Note that by the Liouville-Ostrogradski formula $\det T_{[\alpha]}$ is constant in $z$ and therefore equal to 1.
\end{remark}

%
%\begin{small}
%\begin{remark}
%The bloc-solutions $\Psi_{[\alpha],\cdot j}(z)$ have the following geometric interpretation \cite{HLR}:
%The direction $\alpha$ divides the eigenvalues $\{\lambda_i\mid i\neq j\}$ in two parts according to whether they are on the left or right of the line $\lambda_j+e^{i\alpha}\R$.
%Let $W_{\alpha,j}^L$ be the space of the solutions of \eqref{eq:hg-2} that don't grow faster than $z^Ke^{-\frac{\lambda_j}{z}}$, for some $K<0$, when $z$ tends to 0 following a real positive trajectory of the vector field $e^{i(\alpha-\frac{\pi}{2})}z^2\partial_z$ (i.e. in direction $\alpha+\frac{\pi}{2}$) --- it corresponds to those $\lambda_i$ with 
%$\Re(e^{-i(\alpha+\frac{\pi}{2}}\lambda_i) \leq \Re(e^{i(\alpha+\frac{\pi}{2}}\lambda_j)$.
%Similarly, there is solution subspace $W_{\alpha,j}^R$ consisting of those solutions  of \eqref{eq:hg-2} that don't grow faster than $z^Ke^{-\frac{\lambda_j}{z}}$, for some $K<0$, when $z$ tends to 0 following a negative real trajectory of the vector field $e^{i(\alpha-\frac{\pi}{2})}z^2\partial_z$ (i.e. in direction $\alpha-\frac{\pi}{2}$) --- it corresponds to those $\lambda_i$ with $\Re(e^{-i(\alpha-\frac{\pi}{2}}\lambda_i) \leq \Re(e^{i(\alpha-\frac{\pi}{2}}\lambda_j)$.
%The two subspaces are transversal and their intersection $W_{\alpha,j}^L\cap W_{\alpha,j}^R$ is spanned by  $\Psi_{[\alpha],\cdot j}$. 
%\end{remark}
%\end{small}

\subsection{Fundamental solutions of the Okubo system}

The Okubo system \eqref{eq:hg-1} has $(p+1)$ Fuchsian singularities on $\CP^1$ at the points $\lambda_j$, $1\leq j\leq p$, and $\infty$.
Near each $s=\lambda_j$, the system is written as
\begin{equation*}
(s-\lambda_j)\frac{dv}{ds}=E_j(A+\rho)+{\cal O}(s-\lambda_j),
\end{equation*}
$E_j$ denoting the $j$-th column bloc of the identity matrix, 
and  ${\cal O}(s\!-\!\lambda_j)$ standing for holomorphic terms that vanish at $\lambda_j$.  
Its local ``multipliers" are therefore $A_{jj}+\rho$ in the $j$-th bloc and $0$ in the other $(p-1)$ blocs.

The system comes with two kinds of canonical mixed bases  that will be of interest in this article.
The first one, which will be denoted $V^+$,
consists of the so called \emph{Floquet solutions} $V_{\cdot j}^+(s,\rho)$, which behave asymptotically like $E_j(s-\lambda_j)^{A_{jj}+\rho}$ at the respective singularities $\lambda_j$.
The second one, denoted by $V^-$, is in a sense dual to the Floquet basis; it consists of solutions 
$V_{\cdot j}^-(s,\rho)$ that are analytic at each other singularity $\lambda_i$, $i\neq j$.
This section describes these two bases in more detail.

\begin{definition}
Let $P^+$ be a sector at $\infty$ in the parameter $\rho$-space, on which $\arg\rho\in\, ]\!-\!\pi+\eta,\pi-\eta[$, with $0<\eta<\frac{\pi}{2}$ fixed arbitrary,
and $|\rho|$ is sufficiently big  so that $\rho\notin -\N_{>0}-(\Spec A\cup\Spec A_D)$.

Symmetrically, let $P^-$ be a sector at $\infty$ on which $\arg\rho\in\, ]\eta,2\pi\!-\!\eta[$, and
$|\rho|$ is sufficiently big so that $\rho\notin \N_{>0}-(\Spec A\cup\Spec A_D)$.
\end{definition}

\paragraph{The Floquet bases.}

If $\Re(\rho)>0$ and $\rho$ is large enough so that all eigenvalues of $A_{jj}+\rho$ have positive real part,
then the matrix function $(s-\lambda_j)^{A_{jj}+\rho}$ vanishes when $s$ approaches $\lambda_j$ radially. 
Correspondingly, consider the space of solutions of \eqref{eq:hg-1} that vanish when $s\to \lambda_j$ radially.
\footnote{Note that no nontrivial combination of the other solutions corresponding to the multiplier $0$ can vanish at the singularity, they are asymptotically bigger and cannot hide behind the vanishing solutions. This is what makes this subspace of the space of solutions well-defined.}
It is invariant by the local monodromy, and 
it follows from the local theory of Fuchsian singularities (cf. \cite{IY, Le}) 
and the assumption \eqref{eq:hg-asum} on $A_{jj}$,
that this space has a unique basis written as
\begin{equation}\label{eq:hg-V+asympt} 
V_{\cdot j}^+(s,\rho)=(E_j+{\cal O}(s\!-\!\lambda_j))\cdot (s-\lambda_j)^{A_{jj}+\rho}.
\end{equation}
This construction can be extended to all parameters $\rho\in P^+$, if instead of letting  $s$ approach $\lambda_j$ radially, one lets it approach $\lambda_j$ following a suitable logarithmic spiral along which $(s-\lambda_j)^{A_{jj}+\rho}\to 0$.
More precisely, $s$ should follow a real positive trajectory of the vector field $-e^{i\theta}(s\!-\!\lambda_j)\partial_s$, for some
$\theta\in\, ]\!-\!\frac{\pi}{2},\frac{\pi}{2}[$ with $\Re(e^{i\theta} \rho)>0$.

The Floquet solution $V_{\cdot j}^+$ is closely related to the $j$-th formal canonical solution 
\eqref{eq:hg-Psi_alpha} of the dual Birkhoff system \eqref{eq:hg-2}. 
In fact, the formal Borel transform (=term-wise inverse Laplace transform) of $z^{\rho+1}\hat\Psi_{\cdot j}(z)$ equals
to the convolution integral \cite{Sch1}
\begin{equation}\label{eq:hg-RLI+}
I_{\cdot j}^+(s,\rho):=  %{\cal B}[z^{\rho+1}\hat\Psi_{\cdot j}(z)](s)=
\int_{\lambda_j}^{s} U_{\cdot j}(\sigma\!-\!\lambda_j) (s\!-\!\sigma)^{A_{jj}+\rho-1}d\sigma
\cdot\Gamma(A_{jj}\!+\!\rho)^{-1},
\end{equation}
where the matricial Gamma function is defined as usual by the integral 
$\Gamma(A_{jj}+\rho)=\int_0^{+\infty}t^{-(A_{jj}+\rho-1)}e^{-t}dt, \,$ 
%(cf. \cite{Gan}), 
and $U(s)$ is given in \eqref{eq:hg-U}.
The integral \eqref{eq:hg-RLI+}, also known as \emph{Riemann-Liouville integral} with base-point at $\lambda_j$, 
solves \eqref{eq:hg-1}, and moreover it satisfies 
$\,\frac{d}{ds}I_{\cdot j}^+(s,\rho)=I_{\cdot j}^+(s,\rho-1)$, and therefore solves the difference equation
\begin{equation*}
(s-B)I_{\cdot j}^+(s,\rho\!-\!1)=(A+\rho)I_{\cdot j}^+(s,\rho).
\end{equation*}
The canonical solution $\Psi_{[\alpha]}$ \eqref{eq:hg-Psi_alpha} of the Birkhoff system equals
\begin{equation} \label{eq:hg-I+Psi}
\Psi_{[\alpha],\cdot j}(z)=z^{-\rho-1}\int_{\lambda_j}^{+\infty e^{i\alpha}}I_{\cdot j}^+(s,\rho)\,e^{-\frac{s}{z}}ds.
\end{equation} 

The Floquet solution
$V_{\cdot j}^+$ is obtained from $I_{\cdot j}^+$ after a normalization:
\footnote{The fact that \eqref{eq:hg-V+} has the asymptotic behavior \eqref{eq:hg-V+asympt} is easily verified by integrating per partes.}
\begin{equation}\label{eq:hg-V+}
V_{\cdot j}^+(s,\rho)=\int_{\lambda_j}^{s} U_{\cdot j}(\sigma\!-\!\lambda_j) (s\!-\!\sigma)^{A_{jj}+\rho-1}d\sigma
%=\int_0^{s-\lambda_j} U_{\cdot j}(\sigma) (s\!-\!\lambda_j\!-\!\sigma)^{A_{jj}+\rho-1}d\sigma
\cdot(A_{jj}\!+\!\rho).
\end{equation} 
The integrating path in \eqref{eq:hg-V+} is such  that $\sigma$ follows a positive real trajectory of the vector field
$e^{i\theta}(s\!-\!\sigma)\partial_\sigma$ from the point $\lambda_j$ to $s$,  with suitable $\theta$ as above, avoiding other singularities $\lambda_i$, $i\neq j$, of $U_{\cdot j}(\sigma-\lambda_j)$.
The set of points $s$ which can be reached by such paths with varying $\theta$ then
defines a ramified domain on which \eqref{eq:hg-V+} is defined. 
Note that if $\arg(s-\lambda_j)=\alpha-\theta$, then the integrating trajectory approaches $\lambda_j$ in the asymptotic direction $\alpha$.

Let $\Omega^+_{[\alpha],j}(\rho)$ be a (ramified) domain consisting of those $s$ that can be reached by such trajectory for some direction $\alpha$ in the given homotopy class,
$$\Omega^+_{[\alpha],j}(\rho)\subseteq\{s\in\C\mid \arg(s-\lambda_j)=\alpha'-\theta,\ 
|\theta|<\tfrac{\pi}{2},\ |\theta+\arg\rho|<\tfrac{\pi}{2},\ \alpha'\in[\alpha]\},$$ 
and let $\Omega^+_{[\alpha]}(\rho):=\bigcap_j\Omega^+_{[\alpha],j}(\rho)$.
The restriction of $V^+$ to $\Omega^+_{[\alpha]}$ will be denoted $V_{[\alpha]}^+$.
Different homotopy classes of non-singular directions $[\alpha]$ give rise to to different branches $V_{[\alpha]}^+$ of $V^+$ near infinity.

\paragraph{The co-Floquet bases.}
For given index $j$, and a direction $\alpha$ such that $\lambda_i-\lambda_j\notin e^{i\alpha}\R^+$, $i\neq j$, define 
the co-Floquet solution $V_{[\alpha],\cdot j}^-$ at a singularity $\lambda_j$ as the unique solution analytic
on $\C\sminus \{\lambda_j+e^{i\alpha}\R^+\}$ and having the following asymptotic behavior near $\lambda_j$:
\begin{equation}\label{eq:hg-V-asympt} 
V_{[\alpha],\cdot j}^-(s,\rho)=(E_j+{O}(s\!-\!\lambda_j))\cdot (s\!-\!\lambda_j)^{A_{jj}+\rho},
\end{equation}
with $O$ denoting the usual Landau symbol (the corresponding terms may be ramified). 

Let's be more precise about where does it comes from.
For each singularity $\lambda_i$ and $\rho\in P^-$ large enough so that $A_{ii}+\rho$ has no positive eigenvalue,
%and therefore the matrix function $(s-\lambda_i)^{A_{ii}+\rho}$ explodes when $s\to\lambda_i$ following suitable logarithmic spiral,
define  $\Check W^-_{i}$ as the space of solutions analytic at $\lambda_i$. 
It follows from the local theory of Fuchsian singularities (cf. \cite{IY, Le}) that this space is tangent exactly to the the 
$(p-1)$ vector-blocs $E_k$, $k\neq i$, corresponding to the multiplier 0.
For a point $s\in \lambda_j-e^{i\alpha}\R^+ $, continue each solution subspace $\Check W^-_{i}$, $i\neq j$, toward $s$ in the cut plane $\C\sminus \{\lambda_j+e^{i\alpha}\R^+\}$, and
define the subspace $W_{\alpha,j}^-$ as their intersection. 
Since it consists of solutions analytic at each $\lambda_k$, $k\neq j$, 
it does not depend on the way the $\Check W_{i}^-$ are continued around the singularities $\lambda_k$, only on the direction $\alpha$ of the cut. 

Following \cite{Sch1}, there is a canonical bloc-solution of \eqref{eq:hg-1} generating the space $W_{\alpha,j}^-$ 
given by the integral:
\begin{equation}\label{eq:hg-RLI-}
I_{[\alpha]}^-(s,\rho):=\int_{\lambda_j}^{+\infty e^{i\alpha}} U_{\cdot j}(\sigma\!-\!\lambda_j) (s\!-\!\sigma)^{A_{jj}+\rho-1}d\sigma \cdot\Gamma(1\!-\!A_{jj}-\rho)e^{-\pi i (A_{jj}+\rho)},
\end{equation}
which satisfies again
$\,\frac{d}{ds}I_{\cdot j}^-(s,\rho)=I_{\cdot j}^-(s,\rho-1)$, and therefore solves the difference equation
\begin{equation}\label{eq:hg-I-}
(s-B)I_{\cdot j}^-(s,\rho\!-\!1)=(A+\rho)I_{\cdot j}^-(s,\rho).
\end{equation} 
The integral $I_{[\alpha]}^-$ is a Laplace transform of the canonical solution $\Psi_{[\alpha]}$
\begin{equation*}
I_{[\alpha]}^-(s,\rho):=\int_{0}^{+\infty e^{i\alpha}} z^{\rho-1}\Psi_{[\alpha],\cdot j}(z) e^{\frac{s}{z}} dz,
\end{equation*}
which in turn equals to 
\begin{equation}\label{eq:hg-I-Psi}
\Psi_{[\alpha],\cdot j}(z)=z^{-\rho-1} \frac{1}{2\pi i} \int_{\gamma_{j,\alpha}} I_{\cdot j}^-(s,\rho)\,e^{-\frac{s}{z}}ds,
\end{equation} 
where the path $\gamma_{j,\alpha}$ encircles the ray $\lambda_j+e^{i\alpha}\R^+$ in positive direction.
While $\Psi_{[\alpha]}$ is defined on a sector at 0 of an opening $>\pi$ bisected by $e^{i\alpha}\R^+$,
the integral $I_{[\alpha]}^-$ is defined on a sector at $\lambda_j$ bisected by $\lambda_j+e^{i(\alpha+\pi)}\R^+$ of an opening $>2\pi$.

The co-Floquet solution is obtained after a normalization
\begin{equation}\label{eq:hg-V-}
V_{[\alpha],\cdot j}^-(s,\rho)=\int_{\lambda_j}^{\infty} U_{\cdot j}(\sigma\!-\!\lambda_j) (s\!-\!\sigma)^{A_{jj}+\rho-1}d\sigma
\cdot(A_{jj}\!+\!\rho).
\end{equation}
In the default situation when $\Re(\rho)<0$ the integration path is the straight ray
$\sigma\in\lambda_j+e^{i\alpha}\R^+$ and the integral is defined for $s\in\lambda_j-e^{i\alpha}\R^+$ and extended analytically from there.
In a general situation, the integration path follows a negative real trajectory of the vector field
$e^{i\theta}(s\!-\!\sigma)\partial_\sigma$ from the point $\lambda_j$ to $\infty$, with a suitable 
$\theta\in\, ]\!-\!\frac{\pi}{2},\frac{\pi}{2}[$ such that $\Re(e^{i\theta} \rho)<0$, 
that is end-point homotopic to the ray $\lambda_j+e^{i\alpha}\R^+$ in
$\CP^1\sminus\{\lambda_i,\ i\neq j\}$.
The set of points $s$ that can be reached by such paths 
%(with varying $|\theta|<\frac{\pi}{2}$ and direction $\alpha$ in its homotopy class $[\alpha]$)
defines again a (ramified) sectoral domain 
$$\Omega^-_{[\alpha],j}(\rho)\subseteq\{s\in\C\mid \arg(s-\lambda_j)=\alpha'-\theta+\pi,\ 
|\theta|<\tfrac{\pi}{2},\ |\theta-\pi+\arg\rho|<\tfrac{\pi}{2}, \alpha'\in[\alpha]\},$$ 
on which the integral \eqref{eq:hg-V-} is naturally defined. Let $\Omega^-_{[\alpha]}(\rho):=\bigcap_j\Omega^-_{[\alpha],j}(\rho)$.

\begin{proposition}\label{prop:hg-1}
For $\rho\in P^+\cap P^-$ and $s\in \lambda_j+e^{i \alpha}\R^+$, let $\tilde s=\lambda_j+e^{2\pi i}(s-\lambda_j)$, then
\begin{equation*}
V_{[\alpha],\cdot j}^+(s,\rho)=\big[V_{[\alpha],\cdot j}^-(\tilde s,\rho)-V_{[\alpha],\cdot j}^-(s,\rho)\big]\cdot \big[e^{2\pi i(A_{jj}+\rho)}-1 \big]^{-1},
\end{equation*}
or equivalently
\begin{equation*}
I_{[\alpha],\cdot j}^+(s,\rho)=\frac{1}{2\pi i}\big[I_{[\alpha],\cdot j}^-(\tilde s,\rho)-I_{[\alpha],\cdot j}^-(s,\rho)\big],
\end{equation*}
i.e. $I_{[\alpha],\cdot j}^+$ is a hyperfunction defined by the boundary value of $\frac{1}{2\pi i}I_{[\alpha],\cdot j}^-$ on $\lambda_j+e^{i \alpha}\R^+$.
\end{proposition}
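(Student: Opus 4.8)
The plan is to reduce both displayed identities to a single monodromy statement about the \emph{unnormalized} Riemann--Liouville integrals. Writing $a:=A_{jj}+\rho$ and
\[ G^+(s):=\int_{\lambda_j}^{s} U_{\cdot j}(\sigma\!-\!\lambda_j)\,(s\!-\!\sigma)^{a-1}d\sigma, \qquad G^-(s):=\int_{\lambda_j}^{+\infty e^{i\alpha}} U_{\cdot j}(\sigma\!-\!\lambda_j)\,(s\!-\!\sigma)^{a-1}d\sigma, \]
the definitions \eqref{eq:hg-RLI+}--\eqref{eq:hg-V-} give $V^+_{\cdot j}=G^+\!\cdot(A_{jj}+\rho)$, $V^-_{\cdot j}=G^-\!\cdot(A_{jj}+\rho)$, $I^+_{\cdot j}=G^+\!\cdot\Gamma(a)^{-1}$ and $I^-_{\cdot j}=G^-\!\cdot\Gamma(1\!-\!a)e^{-\pi i a}$. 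Everything therefore follows once I establish the single branch-jump relation
\[ G^-(\tilde s,\rho)-G^-(s,\rho)=G^+(s,\rho)\cdot\big(e^{2\pi i a}-1\big). \]

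The core is a direct computation of the monodromy of $G^-$ as $s$ is continued once counterclockwise around $\lambda_j$ to $\tilde s$. First I would note that $U_{\cdot j}(\sigma-\lambda_j)$ is holomorphic at $\sigma=\lambda_j$ (its Taylor series \eqref{eq:hg-U} converges there), so on a disc around $\lambda_j$ meeting no other $\lambda_i$ the only singularity of the integrand is the branch point $\sigma=s$ of $(s-\sigma)^{a-1}$; in particular the lower endpoint carries no monodromy and any small loop about $\lambda_j$ is negligible. Since both sides of the claimed equality are blocs of solutions of \eqref{eq:hg-1}, analytic on the relevant ramified domain, it suffices to prove the relation for $s$ close to $\lambda_j$, where the continuation $s\mapsto\tilde s$ is realised by a small circle $s(t)=\lambda_j+Re^{i(\alpha+2\pi t)}$ enclosing only $\lambda_j$; the general case then follows by uniqueness of analytic continuation. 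For a point $\sigma_0$ on the ray one has $s(t)-\sigma_0=e^{i\alpha}\big(Re^{2\pi it}-|\sigma_0-\lambda_j|\big)$, which winds once around $0$ when $|\sigma_0-\lambda_j|<R$ and not at all when $|\sigma_0-\lambda_j|>R$. Hence the continued integrand is multiplied by $e^{2\pi i a}$ on the segment $[\lambda_j,s]$ and is unchanged on the tail, and subtracting $G^-(s)$ leaves exactly $G^+(s)\,(e^{2\pi i a}-1)$.

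From the jump relation the $V$-form is immediate: as $A_{jj}+\rho$, $e^{2\pi i a}$ and $(e^{2\pi i a}-1)^{-1}$ are all functions of the single bloc $A_{jj}+\rho$ and hence commute, $V^-_{\cdot j}(\tilde s)-V^-_{\cdot j}(s)=G^+(e^{2\pi i a}-1)(A_{jj}+\rho)=V^+_{\cdot j}(e^{2\pi i a}-1)$, which is the first displayed equality. For the $I$-form I would invoke the matricial reflection formula $\Gamma(a)\Gamma(1-a)=\pi/\sin(\pi a)$ (valid by holomorphic functional calculus under \eqref{eq:hg-asum} and the integrality conditions in the definition of $P^\pm$): the jump of $I^-$ equals $G^+(e^{2\pi i a}-1)\Gamma(1-a)e^{-\pi i a}=G^+\cdot 2i\sin(\pi a)\Gamma(1-a)=2\pi i\,G^+\Gamma(a)^{-1}=2\pi i\,I^+_{\cdot j}$, giving the second equality; the hyperfunction statement is then just a restatement of this.

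The main obstacle, in my view, is analytic rather than algebraic: for $\rho\in P^+\cap P^-$ with $s$ on the ray the branch point $\sigma=s$ lies on the contour, so $G^\pm$ must be read through the spiralling paths and ramified domains $\Omega^\pm_{[\alpha]}$ of Section~2.2 (and, if needed, through analytic continuation in $\rho$ between the regions $\Re a>0$ and $\Re a<0$ where $G^+$ and $G^-$ converge classically). I must check that $\tilde s$ genuinely lies in the domain of $G^-$ --- this is precisely why that domain is a sector of opening $>2\pi$ at $\lambda_j$ --- and that the deformation never meets another singularity $\lambda_i$, which is guaranteed by fixing the homotopy class $[\alpha]$. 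As an independent confirmation one may verify the identity through the Laplace representations \eqref{eq:hg-I+Psi} and \eqref{eq:hg-I-Psi}: collapsing the loop $\gamma_{j,\alpha}$ onto the ray turns \eqref{eq:hg-I-Psi} into the Laplace transform of $\tfrac{1}{2\pi i}\big[I^-_{\cdot j}(\tilde s)-I^-_{\cdot j}(s)\big]$ along $\lambda_j+e^{i\alpha}\R^+$, which must coincide with the transform of $I^+_{\cdot j}$ in \eqref{eq:hg-I+Psi}, and injectivity of the Laplace transform recovers the second equality.
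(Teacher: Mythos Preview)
Your argument is correct. The paper's own proof consists of the single sentence ``Follows from the construction,'' so there is essentially nothing to compare against: the author regards the identity as an immediate consequence of the integral definitions \eqref{eq:hg-RLI+}--\eqref{eq:hg-V-}. What you have written is the honest unpacking of that sentence.

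Your route --- strip off the normalizations, reduce to the single branch-jump relation $G^-(\tilde s)-G^-(s)=G^+(s)(e^{2\pi i a}-1)$, and prove the latter by a winding-number count for $(s-\sigma)^{a-1}$ along the ray --- is exactly the standard Hankel-contour mechanism one would expect here, and your derivation of the $I$-form via the matricial reflection formula $\Gamma(a)\Gamma(1-a)=\pi/\sin(\pi a)$ is clean. The analytic caveats you flag (interpreting the integrals on the ray via the spiralling paths and the ramified domains $\Omega^\pm_{[\alpha]}$, the sector of opening $>2\pi$ for $G^-$, and the option of continuing in $\rho$) are precisely the points the paper suppresses; your Laplace-transform cross-check via \eqref{eq:hg-I+Psi}--\eqref{eq:hg-I-Psi} is also a legitimate and arguably more transparent alternative. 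In short: same approach in spirit, but you have supplied the details the paper omits.
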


\begin{proof}
Follows from the construction.
\end{proof}

The following Proposition is due to Okubo and Kohno.

\begin{proposition}[Gauss'--Kummer's formula]~
\begin{align}
\det V_{[\alpha]}^+(s,\rho)&=\frac{\det \Gamma(A_D+\rho+1)}{\det\Gamma(A+\rho+1)}\cdot\det\,(s-B)^{A_D+\rho},\\[8pt]
\det V_{[\alpha]}^-(s,\rho)&=\frac{\det \Gamma(-A-\rho)}{\det\Gamma(-A_D-\rho)}\cdot\det\,(s-B)^{A_D+\rho}.
\end{align}
\end{proposition}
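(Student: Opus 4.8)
The plan is to determine each determinant in two stages: first its dependence on $s$ by the Wronskian (Liouville--Ostrogradski) identity, and then its dependence on $\rho$ by a first-order difference equation coming from the recurrences satisfied by $I^\pm_{\cdot j}$. I treat $V^+$ in detail; $V^-$ runs in parallel.

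Since $V^+_{[\alpha]}$ is a fundamental matrix of the Okubo system written as $\tfrac{dv}{ds}=(s-B)^{-1}(A+\rho)v$, its determinant satisfies $\tfrac{d}{ds}\log\det V^+_{[\alpha]}=\tr\!\big[(s-B)^{-1}(A+\rho)\big]$. As $(s-B)^{-1}$ is block-diagonal with blocks $(s-\lambda_j)^{-1}$, this trace equals $\sum_j\frac{\tr A_{jj}+n_j\rho}{s-\lambda_j}$, so integration gives
\[
\det V^\pm_{[\alpha]}(s,\rho)=C^\pm(\rho)\,\prod_j(s-\lambda_j)^{\tr A_{jj}+n_j\rho}=C^\pm(\rho)\,\det\,(s-B)^{A_D+\rho},
\]
with $C^\pm(\rho)$ independent of $s$. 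Only the prefactors $C^\pm(\rho)$ remain to be identified, and the whole $s$-dependence already matches the claim.

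Next I would exploit the recurrences in $\rho$. From \eqref{eq:hg-RLI+} and \eqref{eq:hg-V+} one reads $V^+_{\cdot j}=I^+_{\cdot j}\,\Gamma(A_{jj}+\rho+1)$, i.e. $V^+_{[\alpha]}=I^+\,\Gamma(A_D+\rho+1)$, while the difference equation $(s-B)I^+_{\cdot j}(s,\rho-1)=(A+\rho)I^+_{\cdot j}(s,\rho)$ holds block-columnwise and hence for the full matrix. Eliminating $I^+$ yields
\[
V^+_{[\alpha]}(s,\rho-1)=(s-B)^{-1}(A+\rho)\,V^+_{[\alpha]}(s,\rho)\,\Gamma(A_D+\rho+1)^{-1}\Gamma(A_D+\rho),
\]
and taking determinants, using $\det\Gamma(M+1)=\det M\cdot\det\Gamma(M)$ together with $\det(s-B)^{A_D+\rho-1}=\det(s-B)^{-1}\det(s-B)^{A_D+\rho}$, gives the clean recurrence
\[
\frac{C^+(\rho-1)}{C^+(\rho)}=\frac{\det(A+\rho)}{\det(A_D+\rho)}.
\]
The identical elimination using \eqref{eq:hg-RLI-}, \eqref{eq:hg-V-} and \eqref{eq:hg-I-} produces the same recurrence for $C^-$ (the $\rho$-dependence of the $V^-$-normalization contributes exactly $\det(A_D+\rho)^{-1}$). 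Since $\frac{\det\Gamma(A_D+\rho+1)}{\det\Gamma(A+\rho+1)}$ already solves this recurrence, each $C^\pm$ equals it up to a $1$-periodic holomorphic factor $\pi^\pm(\rho)$; equivalently $C^-/C^+$ is $1$-periodic, matching the ratio of the two proposed answers, which by the reflection formula is $\prod_k\sin\pi(\nu_k+\rho)/\prod_k\sin\pi(\mu_k+\rho)$ (here $\mu_k,\nu_k$ denote the eigenvalues of $A$ and $A_D$).

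It remains to show $\pi^+\equiv1$ for the form $\frac{\det\Gamma(A_D+\rho+1)}{\det\Gamma(A+\rho+1)}$ (and symmetrically $\pi^-\equiv1$ for the $V^-$-form as $\rho\to-\infty$), and this is the genuine obstacle. By Stirling and $\tr A=\tr A_D$, the proposed Gamma ratios tend to $1$ along $\rho\to\pm\infty$. On the other side, Schäfke's confluence \cite{Sch2} gives, under $s=\rho z$, that $s^{-\rho}V^+_{[\alpha],\cdot j}(s,\rho)\to\Psi_{[\alpha],\cdot j}(z)\,\rho^{A_{jj}}$ as $\rho\to+\infty$; taking determinants and using $\det T_{[\alpha]}=1$ (the Remark after \eqref{eq:hg-stokesmatrices}), the factors $\det\rho^{A_{jj}}=\rho^{\tr A_{jj}}$ cancel precisely against the $\rho^{\tr A}$ that $(\rho z)^{-n\rho}\det(\rho z-B)^{A_D+\rho}$ contributes, leaving $C^+(\rho)\to1$. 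Thus $\pi^+\to1$ along the real direction. The delicate points are the careful bookkeeping of these $\rho^{A_{jj}}$ and $s^{\pm\rho}$ prefactors, and, above all, establishing enough uniform growth control on $C^\pm$ throughout the sectors $P^\pm$ (in particular as $\operatorname{Im}\rho\to\pm\infty$) so that the $1$-periodic $\pi^\pm$ is bounded; a bounded $1$-periodic holomorphic function is constant (set $q=e^{2\pi i\rho}$ and apply Liouville), which with the limit fixes $\pi^\pm\equiv1$ and hence both formulas. An alternative that sidesteps the confluence input is to expand the explicit integrals \eqref{eq:hg-V+}, \eqref{eq:hg-V-} at $s=\infty$ and evaluate the resulting Beta-type moment integrals, reproducing the Gamma factors directly at the cost of heavier computation.
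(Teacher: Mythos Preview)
Your ingredients are exactly right---the Liouville--Ostrogradski identity for the $s$-dependence, the difference equation for the $\rho$-recurrence, and Sch\"afke's asymptotics to normalize---and they match the paper's. The gap you yourself flag is real: knowing $\pi^\pm(\rho)\to1$ along one real direction does not, by itself, force a $1$-periodic holomorphic function to be constant, and the required boundedness of $C^\pm$ as $\operatorname{Im}\rho\to\pm\infty$ is not something the confluence results hand you for free. So as written the proof is incomplete.

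The paper closes this gap with a much simpler device that you already have in hand. Instead of solving the recurrence ``up to a periodic factor'' and then fighting to kill that factor, iterate the difference equation $m$ times to relate the value at $\rho$ directly to the value at $\rho-m$:
\[
\frac{\det\Gamma(-A_D-\rho+m)}{\det\Gamma(-A-\rho+m)}\cdot\det\!\big[V^-_{[\alpha]}(s,\rho-m)(s-B)^{-A_D-\rho+m}\big]
=\det\!\big[V^-_{[\alpha]}(s,\rho)(s-B)^{-A_D-\rho}\big]\cdot\frac{\det\Gamma(-A_D-\rho)}{\det\Gamma(-A-\rho)}.
\]
Now send $m\to+\infty$ through \emph{integers}. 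The asymptotic input the paper uses is Sch\"afke's theorem~(4.6) in \cite{Sch1}, namely $V^-_{[\alpha],\cdot j}(s,\rho)(s-\lambda_j)^{-A_{jj}-\rho}=E_j+O(|\rho|^{-1})$ as $\Re(\rho)\to-\infty$, locally uniformly in $s$; combined with Stirling (and $\tr A=\tr A_D$) this makes the entire left side tend to $1$. Hence the right side equals $1$ for \emph{every} $\rho$ in the original domain, and the formula drops out. The $V^+$ case is identical with $m\to+\infty$ replaced by shifting $\rho$ upward. No periodic function, no Liouville, no growth control on vertical strips---the integer iteration does exactly the job your $\pi^\pm$ argument was trying to do, but for free.
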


\begin{proof}%[Sketch of proof]
For the sake of completeness we will sketch here the proof in the co-Floquet case; the Floquet case is almost identical
and can be found in \cite{Ok1, Ko1, Ko2}.

The co-Floquet solution has the following asymptotic behavior w.r.t. $\rho$  (see \cite{Sch1}, theorem (4.6)):

\begin{equation}\label{eq:hg-V-as} 
V_{[\alpha],\cdot j}^-(s,\rho)\cdot (s\!-\!\lambda_j)^{-A_{jj}-\rho}=E_j+{O}(\frac{1}{|\rho|}),\quad\text{when }\ \Re(\rho)\to -\infty,
\end{equation}
locally uniformly in the cut plane $\C\sminus(\lambda_j+e^{i\alpha}\R^+)$.
%Let $\rho_m:=\rho\!-\!m$, , and let $m\to+\infty$.
Now, for $m\in\N$ it follows from \eqref{eq:hg-I-} by induction that
$$I_{[\alpha],\cdot j}^-(s,\rho\!-\!m)\cdot (s\!-\!\lambda_j)^{m}=
(A\!+\!\rho\!-\!m\!+\!1)\cdots(A\!+\!\rho)\cdot I_{[\alpha],\cdot j}^-(s,\rho),$$
and hence
\begin{align*}
\Gamma(-A\!-\!\rho)&\cdot\Gamma(-A\!-\!\rho\!+\!m)^{-1}\cdot 
 V_{[\alpha],\cdot j}^-(s,\rho-m)\cdot (s\!-\!\lambda_j)^{-A_{jj}-\rho+m}=\\
&\qquad = V_{[\alpha],\cdot j}^-(s,\rho) \cdot (s\!-\!\lambda_j)^{-A_{jj}-\rho}\cdot 
\Gamma(-A_{jj}\!-\!\rho)\cdot\Gamma(-A_{jj}\!-\!\rho+\!m)^{-1}.
\end{align*}
Therefore
\begin{align*}
\frac{\det\Gamma(-A_D\!-\!\rho\!+\!m)}{\det\Gamma(-A\!-\!\rho\!+\!m)}
&\cdot \det\left[V_{[\alpha]}^-(s,\rho-m)\cdot (s\!-\!B)^{-A_{D}-\rho+m}\right]=\\
&=\det\left[V_{[\alpha]}^-(s,\rho)\cdot (s\!-\!B)^{-A_{D}-\rho}\right]\cdot
\frac{\det\Gamma(-A_D\!-\!\rho)}{\det\Gamma(-A\!-\!\rho)}.
\end{align*}
Letting $m\to+\infty$ and using \eqref{eq:hg-V-as} and usual formulas for the $\Gamma$-function, one can see that
both expressions on the left side tend to 1.
\end{proof}

\begin{corollary}
For $\rho\in P^+$ (resp. $\rho\in P^-$) the Floquet (resp. the co-Floquet) solutions form  a basis of the solution space.
\end{corollary}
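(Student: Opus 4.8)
The plan is to read the statement off the Gauss'--Kummer formula just proved, so that almost no new analysis is needed. The Okubo system \eqref{eq:hg-1} is a rank-$n$ linear system, hence its solution space is $n$-dimensional, and $V_{[\alpha]}^+$ (resp. $V_{[\alpha]}^-$) is an $n\times n$ matrix all of whose columns solve \eqref{eq:hg-1}. Therefore the columns constitute a basis if and only if the matrix is invertible, and it suffices to show that the closed-form value of $\det V_{[\alpha]}^{\pm}$ given by the proposition never vanishes for $\rho$ in the corresponding parameter sector. I would organize the argument as: (i) the power factor is nowhere zero; (ii) the scalar $\Gamma$-ratio is finite and nonzero; (iii) both statements follow from the defining inequalities of $P^{\pm}$.

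For step (i), in the Floquet case I would note that $(s-B)^{A_D+\rho}$ is block-diagonal with blocks $(s-\lambda_j)^{A_{jj}+\rho}=e^{(A_{jj}+\rho)\log(s-\lambda_j)}$, so that $\det(s-B)^{A_D+\rho}=\prod_j e^{\tr(A_{jj}+\rho)\log(s-\lambda_j)}$ is a product of exponentials and hence never $0$ on the domain where $V_{[\alpha]}^+$ is defined. For step (ii) I would use that for a matrix $M$ whose spectrum avoids the poles of $\Gamma$ one has $\det\Gamma(M)=\prod_{\mu}\Gamma(\mu)$, the product running over the eigenvalues counted with multiplicity, so that the ratio $\det\Gamma(A_D+\rho+1)/\det\Gamma(A+\rho+1)$ equals $\prod_{\mu\in\Spec A_D}\Gamma(\mu+\rho+1)\big/\prod_{\nu\in\Spec A}\Gamma(\nu+\rho+1)$.

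The decisive step (iii) is the bookkeeping matching the pole set $-\N_{\geq 0}$ of $\Gamma$ against the definition of $P^+$. By definition, for $\rho\in P^+$ one has $\rho\notin -\N_{>0}-(\Spec A\cup\Spec A_D)$, which is exactly the statement that $\rho+\mu+1\notin -\N_{\geq 0}$ for every $\mu\in\Spec A\cup\Spec A_D$; thus each factor $\Gamma(\mu+\rho+1)$ and $\Gamma(\nu+\rho+1)$ has no pole, and since $\Gamma$ never vanishes, each is moreover nonzero. Hence the ratio is a finite nonzero scalar and $\det V_{[\alpha]}^+\neq 0$, giving the basis property on $P^+$.

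For the co-Floquet case the argument is symmetric, with the relevant factors now $\Gamma(-\mu-\rho)$ and $\Gamma(-\nu-\rho)$, whose poles sit at $\mu+\rho\in\N_{\geq 0}$. The definition of $P^-$ provides $\rho+\mu\notin\N_{>0}$, and the hypothesis that $|\rho|$ be large forces $\rho+\mu\neq 0$ for the bounded eigenvalues $\mu$; together these give $\rho+\mu\notin\N_{\geq 0}$, so every $\Gamma$-factor is again finite and nonzero and $\det V_{[\alpha]}^-\neq 0$. I expect the only real subtlety, and the point I would spell out most carefully, to be precisely this matching: confirming that the poles and zeros of the matricial $\Gamma$-factors fall outside $P^{\pm}$ as defined, including the exclusion of the endpoint $\rho+\mu=0$ in the co-Floquet case, which is handled by the size condition on $|\rho|$ rather than by the arithmetic exclusion. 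Everything else is a direct consequence of the already-established Gauss'--Kummer determinant formula.
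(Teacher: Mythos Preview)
Your proposal is correct and is exactly the intended argument: the corollary is immediate from the Gauss'--Kummer determinant formula together with the definitions of $P^{\pm}$, and your bookkeeping matching the pole set of $\Gamma$ against those definitions (including the handling of the endpoint $\rho+\mu=0$ in the co-Floquet case via the size condition on $|\rho|$) is accurate. The paper states it as a bare corollary without further proof, so there is nothing to compare beyond this.
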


%
%\begin{small}
%\begin{remark}
%Let us stress out the symmetric nature of the two bases.
%For $i\neq j$, the change of the integration variable
%$s-\sigma=\frac{(s-\lambda_i)(\tau-\lambda_j)}{(\tau-\lambda_i)}$, 
%transforming the vector field $e^{i\theta}(s\!-\!\sigma)\partial_\sigma$ to
%$-e^{i\theta}\frac{(\tau-\lambda_i)(\tau-\lambda_j)}{(\lambda_i-\lambda_j)}\partial_\tau$, 
%allows to write
%\begin{align*}
%V_{[\alpha],ij}^\pm(s,\rho)=&\int_s^{\lambda_{ij}^\pm} U_{ij}\Big((\lambda_i\!-\!\lambda_j)(1-\tfrac{s-\lambda_i}{\tau-\lambda_j})\Big)\Big(\frac{\tau\!-\!\lambda_j}{\tau\!-\!\lambda_i}\Big)^{A_{jj}+\rho}
%\frac{(\lambda_i\!-\!\lambda_j) \,d\tau}{(\tau\!-\!\lambda_i)(\tau\!-\!\lambda_j)} \\
%&\cdot(A_{jj}\!+\!\rho)\,(s\!-\!\lambda_j)^{A_{jj}+\rho} ,
%\end{align*} 
%with $\lambda_{ij}^+=\lambda_{j}$ and $\lambda_{ij}^-=\lambda_{i}$.
%The diagonal elements $V_{[\alpha],jj}^\pm(s,\rho)$ are completely determined by the non-diagonal ones through
%\eqref{eq:hg-1} and \eqref{eq:hg-V+asympt}, \eqref{eq:hg-V-asympt},
%\begin{equation*}
%V_{[\alpha],jj}^\pm(s,\rho)=(s\!-\!\lambda_j)^{A_{jj}+\rho}+
%\int_{\lambda_j}^s \Big(\frac{s\!-\!\lambda_j}{\tau\!-\!\lambda_j}\Big)^{A_{jj}+\rho}
%\sum_{i\neq j}A_{ji}V_{[\alpha],ij}^\pm(\tau,\rho) \frac{\,d\tau}{(\tau\!-\!\lambda_j)}.
%\end{equation*} 
%\end{remark}
%\end{small}

\subsection{Fundamental matrix solutions of the confluent family}

The system \eqref{eq:hg-3} has two kinds of canonical fundamental matrix solutions $Y_{[\alpha]}^\pm(z,\rho)$ corresponding to the Floquet 
and co-Floquet  bases of \eqref{eq:hg-2}.
In order to obtain a convergence when $\rho\to\infty$, one has to be a bit careful with the choice of their branch.
It is convenient to write them as
\begin{equation}\label{eq:hg-Y}
Y_{[\alpha]}^\pm(z,\rho)=T_{[\alpha]}^\pm(z,\rho)\cdot \Phi(z,\rho),
\end{equation}
where
$$\Phi(z,\rho):=z^{-\rho}(z-\tfrac{B}{\rho})^{A_{D}+\rho},$$ 
is a canonical solution to the bloc-diagonal system
\begin{equation}\label{eq:hg-3formal}
z(z-\tfrac{B}{\rho})\frac{d\phi}{dz}=(B+zA_D)\phi,
\end{equation} 
whose branch needs to be selected so that it converges to the adequate branch of 
$$\Phi(z,\infty):=z^{A_{D}} e^{-\frac{B}{z}},$$ 
when $\rho\to\infty$.
The bloc-diagonalizing transformation $T_{[\alpha]}^\pm$ is defined by  
$$T_{[\alpha]}^\pm(z,\rho)=V_{[\alpha]}^\pm(\rho z,\rho)\cdot(\rho z-B)^{-A_{D}-\rho},$$
where the branch of $(\rho z-B)^{-A_{D}-\rho}$ is chosen in accord with the one inside the integral \eqref{eq:hg-V+}, \eqref{eq:hg-V-}. 
Hence
\begin{equation}\label{eq:hg-T+}
T_{[\alpha],\cdot j}^+(z,\rho)=\int_0^{\rho z-\lambda_j} U_{\cdot j}(\sigma) \Big(\frac{\rho z-\lambda_j-\sigma}{\rho z-\lambda_j}\Big)^{A_{jj}+\rho}\frac{d\sigma}{\rho z-\lambda_j-\sigma} \cdot(A_{jj}+\rho),
\end{equation} 
where the integration path follows a positive trajectory of the vector field $\,e^{i\theta}(\rho z\!-\!\lambda_j\!-\!\sigma)\partial_\sigma$ from the point 0 to $\rho z\!-\!\lambda_j$,
and the branch of $\Big(\frac{\rho z-\lambda_j-\sigma}{\rho z-\lambda_j}\Big)^{A_{jj}+\rho}$ is chosen so that it is equal 1  at the endpoint.
Remark, that at the limit, when $\rho\to\infty$ radially with fixed $\arg\rho$, 
the trajectories of the given vector field become trajectories of the vector field $e^{i\alpha}\partial_z$ with 
$\alpha=\theta+\arg\rho+\arg z$.
Therefore the integral \eqref{eq:hg-T+} has a well-defined limit
\begin{equation*}
T_{[\alpha],\cdot j}^+(z,\infty)=\frac{1}{z}\int_0^{+\infty e^{i\alpha}} U_{\cdot j}(\sigma)\, e^{-\frac{\sigma}{z}}
d\sigma.
\end{equation*} 

\bigskip
Similarly,
\begin{equation}\label{eq:hg-T-}
T_{[\alpha],\cdot j}^-(z,\rho)=\int_0^{\infty} U_{\cdot j}(\sigma) \Big(\frac{\rho z-\lambda_j-\sigma}{\rho z-\lambda_j}\Big)^{A_{jj}+\rho}\frac{d\sigma}{\rho z-\lambda_j-\sigma} \cdot(A_{jj}+\rho),
\end{equation} 
where the integration path follows a positive trajectory of  $\,e^{i\theta-\pi}(\rho z\!-\!\lambda_j\!-\!\sigma)\partial_\sigma$ from the point 0 to $\infty$,
which at the limit, when $\rho\to\infty$ radially, becomes 
a trajectory of $\,e^{i\alpha}\partial_z$ with 
$\alpha=\theta+\arg\rho-\pi+\arg z$, 
and the integral \eqref{eq:hg-T-} becomes
\begin{equation*}
T_{[\alpha],\cdot j}^-(z,\infty)=\frac{1}{z}\int_0^{+\infty e^{i\alpha}} U_{\cdot j}(\sigma)\, e^{-\frac{\sigma}{z}}
d\sigma.
\end{equation*} 

The transformations $T_{[\alpha]}^\pm(\cdot,\rho)$ are defined on sectors
$$\mscr S_{[\alpha]}^\pm(\rho):=\tfrac{1}{\rho}\Omega^\pm_{[\alpha]}(\rho),$$
which tend to a subsector of $\mscr S_{[\alpha]}(\infty)$ \eqref{eq:hg-sector} depending on the radial direction in which $\rho\to\infty$.

\paragraph{Stokes matrices of the confluent family}
Fixing a branch of $\Phi(z,\rho)$ near $z=\infty$ and its restrictions to the sectors
$\mscr S_{[\alpha]}^\pm(\rho)$ one obtains a canonical set of fundamental matrix 
solutions $Y_{[\alpha]}^\pm(z,\rho)$ \eqref{eq:hg-Y}.
The connection matrices between these solutions near $z=\infty$ corresponding to different non-singular directions $\alpha_1,\, \alpha_2$
$$Y_{[\alpha_2]}^\pm=Y_{[\alpha_1]}^\pm\cdot S_{[\alpha_1][\alpha_2]}^\pm(\rho)$$
will be called \emph{Stokes matrices} of the family. 

\begin{proposition}\label{prop:hg-stokesmatrices}
\begin{align*}
S_{[\alpha_1][\alpha_2]}^+(\rho)&=\rho^{A_D}\Gamma(A_D+\rho+1)^{-1}
S_{[\alpha_1][\alpha_2]}(\infty)\,\Gamma(A_D+\rho+1)\rho^{-A_D},
\\
S_{[\alpha_1][\alpha_2]}^-(\rho)&=\big(e^{-\pi i}\rho\big)^{-A_D}\Gamma(-A_D-\rho)\,
S_{[\alpha_1][\alpha_2]}(\infty)\,\Gamma(-A_D-\rho)^{-1}\big(e^{-\pi i}\rho\big)^{-A_D},
\end{align*}
which tends to the Stokes matrix $S_{[\alpha_1][\alpha_2]}(\infty)$ \eqref{eq:hg-stokesmatrices} when $\rho\to\infty$ in
$P^\pm$ respectively.
For two neighboring direction classes $[\alpha_1],\, [\alpha_2]$ the  Stokes matrix  $S_{[\alpha_1][\alpha_2]}^\pm(\rho)$ is unipotent with only non-zero 
off-diagonal blocs at the positions $(i,j)$ corresponding to the direction of $(\lambda_i-\lambda_j)\R+$ separating $\alpha,\,\alpha'$.
\end{proposition}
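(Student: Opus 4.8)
The plan is to reduce the Stokes matrices $S^\pm_{[\alpha_1][\alpha_2]}(\rho)$ to connection matrices of the mixed bases $V^\pm$, and then to transport the Stokes data of the limit system \eqref{eq:hg-2} across the Borel--Laplace correspondence. First I would insert the factorizations defining $Y^\pm_{[\alpha]}$ into \eqref{eq:hg-Y}. Writing $\rho z-B=\rho(z-\tfrac{B}{\rho})$ and using that $\rho$ is a scalar while $B$ and $A_D$ are block-diagonal and commute block-wise, the two middle factors collapse, $(\rho z-B)^{-A_D-\rho}\,\Phi(z,\rho)=\rho^{-A_D-\rho}z^{-\rho}$, so that $Y^\pm_{[\alpha]}(z,\rho)=V^\pm_{[\alpha]}(\rho z,\rho)\,\rho^{-A_D-\rho}z^{-\rho}$. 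If $C^\pm_{[\alpha_1][\alpha_2]}(\rho)$ denotes the connection matrix of the $V^\pm$ basis near $s=\infty$, $V^\pm_{[\alpha_2]}=V^\pm_{[\alpha_1]}C^\pm_{[\alpha_1][\alpha_2]}(\rho)$, then the scalar factors $z^{-\rho}$ and $\rho^{-\rho}$ cancel and one is left with $S^\pm_{[\alpha_1][\alpha_2]}(\rho)=\rho^{A_D}\,C^\pm_{[\alpha_1][\alpha_2]}(\rho)\,\rho^{-A_D}$. (Here I suppress the routine verification that the branches of $(\rho z-B)^{-A_D-\rho}$ and of $\Phi$ split as claimed.)

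The heart of the argument is to identify $C^\pm$ with $S_{[\alpha_1][\alpha_2]}(\infty)$. Comparing \eqref{eq:hg-RLI+} with \eqref{eq:hg-V+} gives $V^+=I^+\,\Gamma(A_D+\rho+1)$ with a block-diagonal $\Gamma$-factor, and comparing \eqref{eq:hg-RLI-} with \eqref{eq:hg-V-} gives $V^-=I^-N^-$ with $N^-=-e^{\pi i(A_D+\rho)}\Gamma(-A_D-\rho)^{-1}$, using $\Gamma(1-x)=-x\Gamma(-x)$. The integral representations \eqref{eq:hg-I+Psi} and \eqref{eq:hg-I-Psi} exhibit each block column $I^\pm_{\cdot j}$ as a Laplace-type transform of the canonical solution $\Psi_{[\alpha],\cdot j}$ built from the same Borel transform $U_{\cdot j}$ as in \eqref{eq:hg-Psi_alpha}. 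I would then show that passing from the class $[\alpha_1]$ to $[\alpha_2]$ produces on the $I^\pm$ side exactly the Stokes matrix $S_{[\alpha_1][\alpha_2]}(\infty)$ of the limit system: both jumps are created by deforming the integration contour past the same singularity $\lambda_i-\lambda_j$ of $U_{\cdot j}$, so the multipliers coincide and carry no extra $\rho$-dependence (this is the Borel--Laplace duality of \cite{BJL, Sch1}). Consequently $C^+=\Gamma(A_D+\rho+1)^{-1}S_{[\alpha_1][\alpha_2]}(\infty)\Gamma(A_D+\rho+1)$ and $C^-=(N^-)^{-1}S_{[\alpha_1][\alpha_2]}(\infty)N^-$; substituting into the reduction above and rearranging the commuting scalar and $e^{\pi i A_D}$ factors gives the stated formulas. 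I expect this identification --- proving that the direction-change jump of the convolution integrals \eqref{eq:hg-V+}, \eqref{eq:hg-V-} is governed by precisely the same, $\rho$-independent, Stokes multiplier as the Borel sum \eqref{eq:hg-Psi_alpha} --- to be the main obstacle.

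Convergence is then a computation with the $\Gamma$-function. Since $S^\pm(\rho)$ is a conjugation of $S_{[\alpha_1][\alpha_2]}(\infty)$ by a block-diagonal matrix, it suffices to examine an off-diagonal block $(i,j)$ carrying $S_{[\alpha_1][\alpha_2]}(\infty)$; in the $+$ case the conjugation acts there as $\rho^{A_{ii}}\Gamma(A_{ii}+\rho+1)^{-1}\,(\cdot)\,\Gamma(A_{jj}+\rho+1)\rho^{-A_{jj}}$. By the standard asymptotics $\Gamma(a+\rho+1)\sim\rho^{a}\Gamma(\rho+1)$ as $\rho\to\infty$ with $\arg\rho$ bounded away from the poles, valid throughout $P^+$, the factor $\Gamma(A_{jj}+\rho+1)\rho^{-A_{jj}}$ behaves like the scalar $\Gamma(\rho+1)$ and $\rho^{A_{ii}}\Gamma(A_{ii}+\rho+1)^{-1}$ like $\Gamma(\rho+1)^{-1}$; the scalar $\Gamma(\rho+1)$ cancels and the block tends to the corresponding block of $S_{[\alpha_1][\alpha_2]}(\infty)$. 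The $-$ case is identical with $\Gamma(-A_D-\rho)$ and $\rho\to\infty$ in $P^-$, where $-\rho$ stays away from the poles of $\Gamma$.

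Finally, the structure for neighboring classes is inherited from the limit. As recorded after \eqref{eq:hg-stokesmatrices}, for neighboring $[\alpha_1],[\alpha_2]$ one has $S_{[\alpha_1][\alpha_2]}(\infty)=I+N$ with $N$ nilpotent and supported only on the single off-diagonal block fixed by the separating singularity $\lambda_i-\lambda_j$. Conjugation by the block-diagonal matrix $g=\rho^{A_D}\Gamma(\cdot)$ sends $I+N$ to $I+gNg^{-1}$ with $gNg^{-1}$ again nilpotent and supported on the same block, since a block-diagonal conjugation maps the $(i,j)$ block to the $(i,j)$ block. Hence $S^\pm_{[\alpha_1][\alpha_2]}(\rho)$ is unipotent with a non-zero off-diagonal block only at the position $(i,j)$ associated with $(\lambda_i-\lambda_j)\R^+$, as claimed.
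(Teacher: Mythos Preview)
Your proposal is correct and follows essentially the same route as the paper. The paper's proof is extremely terse: it just states that the key relation $I_{[\alpha_2]}^\pm=I_{[\alpha_1]}^\pm\cdot S_{[\alpha_1][\alpha_2]}(\infty)$ follows from the Laplace-type formulas \eqref{eq:hg-I+Psi}, \eqref{eq:hg-I-Psi}, and leaves the unwinding of the $\Gamma$-normalizations between $I^\pm$ and $V^\pm$, the $\rho^{A_D}$ factor coming from $Y^\pm=V^\pm\!\cdot\rho^{-A_D-\rho}z^{-\rho}$, the Stirling-type convergence argument, and the block-structure claim all implicit --- you have simply supplied these details explicitly.
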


\begin{proof}
This follows from the relation
$$I_{[\alpha_2]}^\pm(s,\rho)=I_{[\alpha_1]}^\pm(s,\rho)\cdot S_{[\alpha_1][\alpha_2]}^\pm(\rho),$$
which is a consequence of the formulas \eqref{eq:hg-I+Psi}, resp.  \eqref{eq:hg-I-Psi}.
\end{proof}

\begin{figure}[t]
\centering
\subfigure[$\rho\in\R+$]{\includegraphics[width=0.47\textwidth]{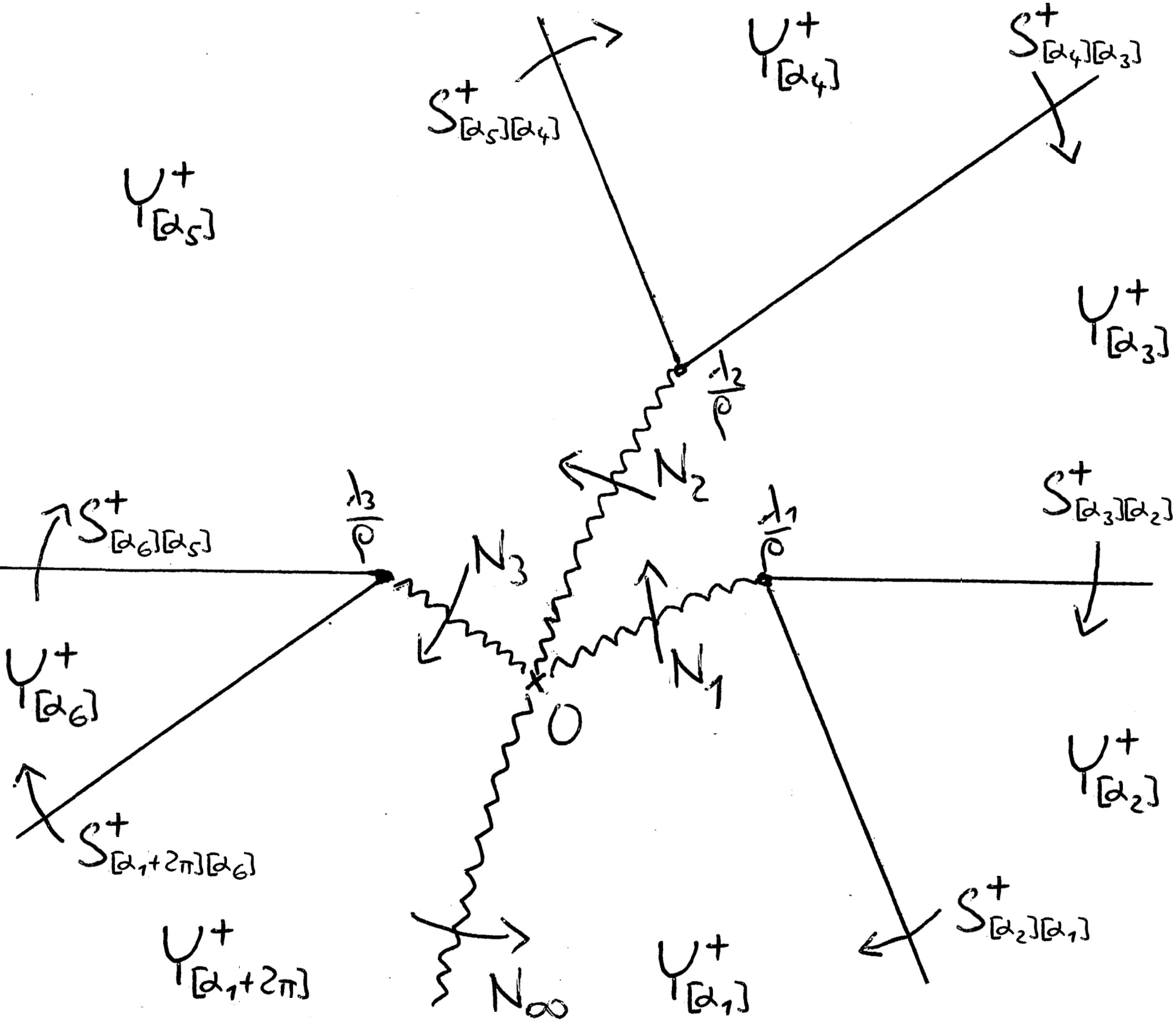}}
\hskip 0.05\textwidth
\subfigure[$\rho\in\R-$]{\includegraphics[width=0.47\textwidth]{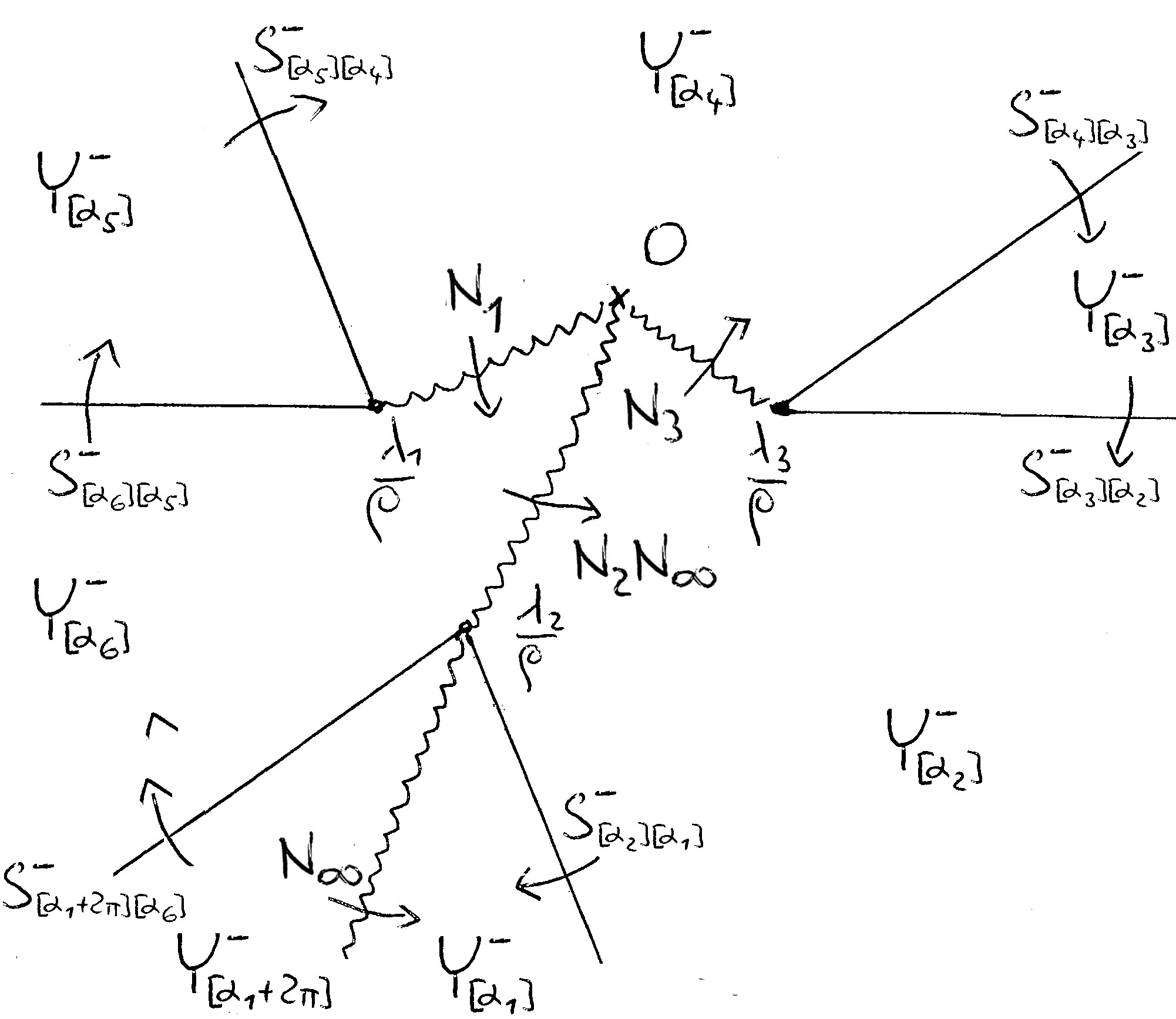}}
\caption{The connection matrices between different branches of $Y^\pm$ in Example~\ref{example:hg-1}.}
\label{figure:hg-1}
\end{figure}

\begin{example}[Figure~\ref{figure:hg-1}]\label{example:hg-1}
Suppose $B$ has just three eigenvalues $\lambda_i$, $i=1,2,3$ and assume they are not colinear.
For simplicity we will consider only the default situation when $\rho\in\R\pm$ and restrict the domains of $Y_{[\alpha]}^\pm$ to a smaller
sector consisting of the points $z\in\C$ for which the integration path in \eqref{eq:hg-T+}, resp. \eqref{eq:hg-T-}, can be taken a straight segment.
Near $\infty$, these sectors are are separated by the outer parts of lines through $\lambda_i,\lambda_k$, whose crossing is governed by the Stokes matrices.
For each singularity  $\frac{\lambda_j}{\rho}$ or $\infty$ make a cut (wavy line in Figure~\ref{figure:hg-1}) from the origin on which the formal solution $\Phi$ is branched, and therefore changed by its formal monodromy $$N_j=\left(\begin{smallmatrix}I_{n_1}\hskip-3pt &&&& \\[-6pt] & \ddots &&& \\[1pt] &&\hskip-12pt e^{2\pi i(A_{jj}+\rho)}\hskip-18pt && \\[-4pt] &&& \ddots & \\ &&&& I_{n_3}\end{smallmatrix}\right),\quad j=1,2,3,
\quad\text{and}\quad N_\infty=e^{-2\pi i A_{D}}.$$

\end{example}

\goodbreak

\section{Confluence in generalized hypergeometric equation}

This section illustrates the confluence procedure on the example of the generalized hypergeometric equation, where things can be expressed very explicitly.
Most of the formulas come from \cite{Du, OTY, Lu}. To simplify the writing we adopt the following notation.

\begin{notation}
Let $\bm\alpha=(\alpha_1,\ldots,\alpha_n)$, $\bm\beta=(\beta_1,\ldots,\beta_n)\in\C^n$ and
denote 
\begin{itemize}
\item $\bmbeta{\,j}\in\C^{n-1}$ obtained from $\bm\beta$ by omitting the $j$-th  component, similarly with
$\bmbeta{n,j}\in\C^{n-2}$,
\item for $c\in\C$, let $\,\bm\alpha-c:=(\alpha_1-c,\ldots,\alpha_n-c),$
\item for a function $f:\C\to\C$, write shortly 
 $\,f(\bm\alpha):=\displaystyle\Pi_{i}f(\alpha_i).$
\end{itemize}
\end{notation}

In the above notation, \emph{the generalized hypergeometric equation} of order $n$, is written as $\displaystyle D\hg{\bm\alpha}{\bm\beta}{s}w=0$, where
\begin{align}\label{eq:hg-hg}
D\hg{\bm\alpha}{\bm\beta}{s} 
:&=s(\delta_s+\alpha_n)\ldots(\delta_s+\alpha_1)-
(\delta_s+\beta_{n}\!-\!1)\ldots(\delta_s+\beta_1\!-\!1) \nonumber \\ 
&=s(\delta_s+\bm\alpha)-(\delta_s+\bm\beta-1),
\end{align}
and $\delta_s=s\frac{d}{ds}$ is the Euler operator. 
It has three regular singular points at $0$, $1$ and $\infty$.

Since 
\begin{equation}\label{eq:hg-D1}
D\hg{\bm\alpha}{\bm\beta}{s}(s^c w)=s^cD\hg{\bm\alpha\!+\!c}{\bm\beta\!+\!c}{s}w,
\end{equation}
one can always use the transformation $w\mapsto s^{1-\beta_j} w$, to bring the equation to a more
usual form in which one of the $\beta_j$'s equals 1: 
$\displaystyle D\hg{\bm\alpha\!+\!1\!-\!\beta_j}{\bm\beta\!+\!1\!-\!\beta_j}{s}$.
The equation \eqref{eq:hg-hg} has thus $n$ local solution at $s=0$ given by  
the hypergeometric series   
$$s^{1-\beta_j} {}_nF_{n-1}\hg{\bm\alpha\!+\!1\!-\!\beta_j}{\bmbeta{j}\!+\!1\!-\!\beta_j}{s}:=
s^{1-\beta_j} \sum_{k=0}^{+\infty}\frac{(\bm\alpha\!+\!1\!-\!\beta_j)_k}{(\bmbeta{j}\!+\!1\!-\!\beta_j)_k(1)_k}s^k, \quad 1\leq j\leq n,$$
convergent for $|s|<1$,
where $(a)_k$ denotes the Pochhammer symbol 
$$(a)_k=a\,(a\!+\!1)\ldots(a\!+\!k\!-\!1),\quad  (a)_0=1.$$
These solutions are linearly independent if no two $\beta_j$'s differ by an integer.

\begin{remark}
There is a symmetry between the singular points $0$ and $\infty$ given by the relation
\begin{equation}\label{eq:hg-D2}
D\hg{\bm\alpha}{\bm\beta}{s}= (-1)^ns\, D\hg{1\!-\!\bm\beta}{1\!-\!\bm\alpha}{\frac{1}{s}}.
\end{equation}

In the case of the Gauss' hypergeometric equation ($n=2$)
there is also a symmetry between the two singular points $0,1$
due to the relation 
$$D\hg{\alpha_1,\alpha_2}{1,\beta}{s}=\frac{s}{1\!-\!s}D\hg{\alpha_1,\alpha_2}{1,-\gamma}{1\!-\!s},\qquad \gamma=\beta\!-\!1\!-\!\alpha_1\!-\!\alpha_2.$$
This symmetry is broken for $n>2$.
\end{remark}

\paragraph{The confluence.}
We are interested in the situation when $\beta_n\to\infty$. The situation when $\alpha_n\to\infty$ would be similar due to the symmetry \eqref{eq:hg-D2}.
Let 
$$\rho=\beta_n-1,\qquad s=\rho z$$ 
then 
\begin{equation}\label{eq:hg-hgco}
\tfrac{1}{\rho} D\hg{\bm\alpha}{\bm\beta}{\rho z}=z(\delta_z+\bm\alpha)-(\tfrac{1}{\rho}\delta_z+1)(\delta_z+\bmbeta{n}-1),
\end{equation}
where the regular singularities at $z=0$ and $z=\frac{1}{\rho}$ merge for $\rho\to\infty$ to form an irregular
singularity.

Setting
$y=(y_1,\ldots,y_n)^\msf{T}$, with
$$y_{i+1}(z,\rho)=(\delta_z+\beta_i\!-\!1)y_i(z,\rho),\quad\text{for}\quad 1\leq i\leq n\!-\!1,$$
the equation \eqref{eq:hg-hgco}: $\,\displaystyle\tfrac{1}{\rho} D\hg{\bm\alpha}{\bm\beta}{\rho z}y_1=0\,$
is  written in the form of a family of systems \eqref{eq:hg-3} with
\begin{equation}\label{eq:hg-AB}
B=\begin{pmatrix}
0 & & & \\[-3pt] 
& \ddots & & \\
& & 0 & \\ 
& & & 1
\end{pmatrix},\qquad
A=\begin{pmatrix}
1\!-\!\beta_1\hskip-9pt  & \hskip6pt 1\hskip-6pt  & & \\[-3pt]
& \ddots & \ddots  & \\
& & \hskip-12pt 1\!-\!\beta_{n-1} \hskip-6pt & 1 \\
* & \ldots & * & \gamma
\end{pmatrix},
\end{equation}
$$\gamma:=\sum_1^{n-1}(\beta_j\!-\!1)-\sum_1^{n}\alpha_n,$$
where $A$ has $-\alpha_1,\ldots,-\alpha_n$ as eigenvalues.

\paragraph{The Okubo system.}
The corresponding Okubo system 
\begin{equation}\label{eq:hg-1x} 
(s-B)\frac{dv}{ds}=(A+\rho)v
\end{equation}
 for $v(s,\rho)=s^{-\rho}y(z,\rho)$, 
 is associated to the generalized hypergeometric equation
$$
\displaystyle D\hg{\bm\alpha\!-\!\rho}{\bm\beta\!-\!\rho}{s}v_1(s,\rho)=0,
\qquad 
v_{i+1}(s,\rho)=(\delta_s+\beta_i\!-\!1\!-\!\rho)v_i(s,\rho). 
$$ 

Suppose now, that
\begin{equation}\label{eq:hg-asump2}
\beta_i-\beta_j\notin\Z\ \text{ for all } i\neq j.
%, \quad\text{and}\quad \alpha_i-\beta_j\notin\Z_{<0}\ \text{ for all }i,j.
\end{equation}

For $\rho\neq\infty$, we have $n-1$ singular solutions of the Okubo system near $s=0$ whose first component is given by
\begin{equation*}
\tilde v_{1j}^+(s,\rho)
=s^{1-\beta_j+\rho}\, {}_nF_{n-1}\hg{\bm\alpha\!+\!1\!-\!\beta_j}{\bmbeta{j}\!+\!1\!-\!\beta_j}{s}, \qquad |s|<1,\quad 1\leq j\leq n-1,
\end{equation*}
and one singular solution at $s=1$ given by Meijer G-function
\begin{align*}
\tilde v_{1n}^+(s,\rho)
&= G_{n,n}^{n,0}\hg{\bm\beta}{\bm\alpha}{\frac{1}{z}} \cdot\Gamma(1\!+\!\gamma\!+\!\rho),\qquad |s|>1,\notag \\
&=(s\!-\!1)^{\gamma+\rho}\sum_{k=0}^{+\infty}\frac{(-1)^k c_k}{(\gamma+\rho+1)_{k+n-1}}(s\!-\!1)^{k+n-1},\qquad |s-1|<1,
\end{align*}
with $c_0=1$ and the coefficients $c_k$ independent of $\rho$ (see \cite{Du}, p.~601)
\begin{equation}\label{eq:hg-ck}
\,c_k=\text{\footnotesize{$\displaystyle\sum_{i_1+\ldots+i_{n-1}=k}
\prod_{j=1}^{n-1}\frac{
(\beta_1\!-\alpha_1\!+i_1+\ldots+\beta_j\!-\alpha_j\!+i_j)_{i_j}\cdot (\beta_j\!-\!\alpha_{j+1})_{i_j}}{i_j!}$}}.
\end{equation}

It is easy to see that the terms of the fundamental solution matrix 
$\tilde V^+=(\tilde v_{ij}^+)$ have the asymptotic behavior
$$\tilde V^+(s,\rho)\sim \left(R+O(1)(s-B)\right)(s-B)^{\tilde A_D+\rho},$$
where the upper-triangular matrix $R=(r_{ij})$
$$r_{1j}=1,\quad r_{ij}=(\beta_1\!-\!\beta_j)\ldots(\beta_{i-1}\!-\!\beta_j),\ i>1,\quad\text{and}\quad
r_{nn}=1,\quad r_{in}=0,\  i< n ,$$
commutes with $B$ and diagonalizes 
\begin{equation*}%\label{eq:hg-1}
A_D=\begin{pmatrix}
1\!-\!\beta_1\hskip-9pt  & \hskip3pt 1\hskip-6pt  & & &\\[-0.7\normalbaselineskip] 
& \raisebox{0.15\normalbaselineskip}{$\ddots$} & \hskip-3pt\raisebox{0.7\normalbaselineskip}{$\ddots$}  &\hskip-12pt 1 &\\
 & & & \hskip-24pt 1\!-\!\beta_{n-1} \hskip-6pt &  \\
 & &  &  & \,\gamma
\end{pmatrix},
\qquad
R^{-1}\!A_DR=\tilde A_D:=\begin{pmatrix}
1\!-\!\beta_1\hskip-9pt  & \hskip3pt   & & \\[-3pt] & \ddots &   & \\ & & \hskip-12pt 1\!-\!\beta_{n-1} \hskip-6pt &  \\  &  &  & \,\gamma
\end{pmatrix}.
\end{equation*}
Therefore,
\begin{equation}\label{eq:hg-VV}
V^{+\!}(s,\rho)=\tilde V^{+\!}(s,\rho)R^{-1}
\end{equation}
is the Floquet bases of the Okubo system \eqref{eq:hg-1x} with the asymptotic behavior \eqref{eq:hg-V+asympt},
while $R^{-1}\tilde V^+$ is the Floquet basis of the Okubo system with
$\tilde B=B$, $\tilde A=R^{-1}AR$.
%=\left(\begin{smallmatrix}
%1\!-\!\beta_1\hskip-6pt  & \hskip3pt   & & *\\[-4pt] 
%& \ddots &   & \vdots\\[2pt] 
%& & \hskip-6pt 1\!-\!\beta_{n-1} \hskip-3pt & * \\[2pt]
% * & \lgridwidth=dots & * & \,\gamma
%\end{smallmatrix}\right)

\begin{figure}[t]
\centering
\includegraphics[width=\textwidth]{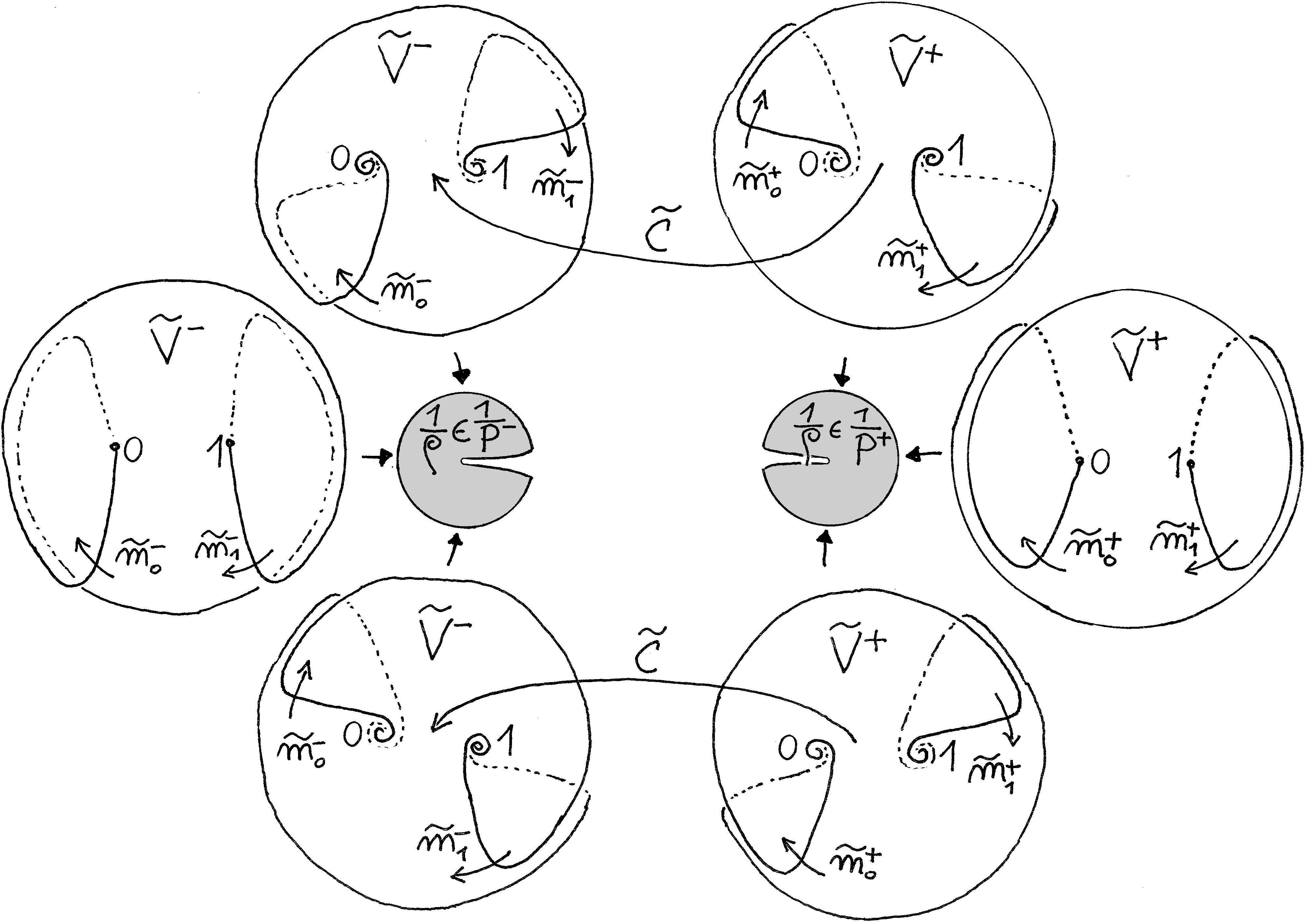}
\caption{The fundamental matrix solutions $\tilde V^\pm$ and their monodromy and transition matrices,
according to the values of $\frac{1}{\rho}$.
}
\label{figure:hg-V}
\end{figure}

\medskip
Monodromy matrices $\tilde m_0^+(\rho)$ and $\tilde m_1^+(\rho)$ of the solution $\tilde V^+$ around the singularities $0$ and $1$  in the positive direction from a base-point at $s=\frac{1}{2}$ are calculated in \cite{OTY}:
\begin{equation*}
\tilde m_0^+=
\begin{pmatrix}
e_1 &  & & \xi_1(e_1\!-\!1)\\[-3pt]
& \hskip-6pt\ddots &   & \vdots \\
& & \hskip-6pt e_{n-1} \hskip-3pt &  \xi_{n-1}(e_{n-1}\!-\!1)\hskip-3pt\\[3pt]
 &  &  & 1
\end{pmatrix},
\qquad
\tilde m_1^+=
\begin{pmatrix}
1 &   & &  \\[-3pt]
 & \ddots &   & \\ 
 & & \hskip-6pt 1 & \\[3pt]
\eta_1(e_n\!-\!1) \hskip-4pt & \ldots & \hskip-4pt \eta_{n-1}(e_n\!-\!1)\hskip-4pt & e_n
 \end{pmatrix},
\end{equation*}
where 
$\, e_j=e^{2\pi i(1-\beta_j+\rho)},\,\text{ for }\, j\leq n-1,\, \ e_n=e^{2\pi i(\gamma+\rho)},\,$ and
$$\xi_j=e^{\pi i(\gamma+\rho)} \frac{\Gamma(1\!+\!\gamma\!+\!\rho) \Gamma(\beta_j\!-\!\bmbeta{j})}
{\Gamma(\beta_j\!-\!\bm\alpha)},\qquad
\eta_j=e^{-\pi i(\gamma+\rho)} \frac{\Gamma(\!-\gamma\!-\!\rho) \Gamma(1\!-\!\beta_j\!+\!\bmbeta{j})}
{\Gamma(1\!-\!\beta_j\!+\!\bm\alpha)}.
$$
The connection matrix between the Floquet and the co-Floquet bases
$\tilde V^-(s,\rho)=\tilde V^+(s,\rho)\tilde C(\rho)$ is also calculated in \cite{OTY}:
\begin{equation*}
\tilde C=\begin{pmatrix}
1 &   & & \hskip-6pt -\xi_1 \\[-3pt]
 & \ddots &   & \vdots\\ 
 & & \hskip-6pt 1 & \hskip-6pt -\xi_{n-1}\hskip-3pt\\[3pt]
-\eta_1 \hskip-3pt & \ldots & \hskip-3pt -\eta_{n-1} \hskip-6pt & 1
 \end{pmatrix}.
\end{equation*}
Therefore the corresponding monodromy matrices of $\tilde V^-$ are equal to
$$\tilde m_\iota^-(\rho)=\tilde C(\rho)^{-1}\tilde m_\iota^+(\rho)\tilde C(\rho),\quad \iota=0,1,$$
\begin{equation*}
\tilde m_0^-=
\begin{pmatrix}
e_1 \hskip-12pt &  & & \\[-3pt]
& \hskip-6pt\ddots &   &  \\
& & \hskip-20pt e_{n-1}  & \\[3pt]
\eta_1(e_1\!-\!1) \hskip-4pt & \ldots & \hskip-4pt \eta_{n-1}(e_{n-1}\!-\!1)\hskip-4pt  & 1
\end{pmatrix},
\qquad
\tilde m_1^-=
\begin{pmatrix}
1 &  & &  \xi_1(e_n\!-\!1) \\[-3pt]
 & \ddots &   & \vdots\\ 
 & & 1 & \xi_{n-1}(e_n\!-\!1)\hskip-3pt\\[3pt]
& &  & e_n
 \end{pmatrix}.
\end{equation*}

\begin{remark}
 The Floquet and co-Floquet bases $\tilde V^\pm$ are both defined and analytic not only on $\rho\in P^\pm$ but for all $\rho\in\C$ except of a discrete set of resonant values accumulating at $\infty$.
\end{remark}
 
%The Floquet and co-Floquet bases $V^\pm(s,\rho)=\tilde V^\pm(s,\rho)R^{-1}$ are well-defined under a weaker assumption than \eqref{eq:hg-asump2}, that no two $\beta_j$'s differ by a non-zero integer.

\paragraph{The confluent family.}

Under the assumption \eqref{eq:hg-asump2} there are $n\!-\!1$ parameter-dependent singular solutions of the confluent equation \eqref{eq:hg-hgco} near $z=0$ 
%written as $$\tilde y_{\cdot j}^+(z,\rho)=z^{1-\beta_j}\,\tilde t_{\cdot j}^+(z,\rho), \qquad 1\leq j\leq n-1.$$ Their first component is 
are given for $\rho\neq\infty$ by %\,\footnote{Here $G_{*,*}^{*,*}$ is the Meijer $G$-function.}
\begin{align*}
\tilde y_{1j}^+(z,\rho)
&=z^{1-\beta_j}\,{}_nF_{n-1}\hg{\bm\alpha\!+\!1\!-\!\beta_j}{\bmbeta{j}\!+\!1\!-\!\beta_j}{\rho z}\\
&=z^{1-\beta_j}\, G_{n,n}^{n,1}\hg{1,\,\bmbeta{j}\!+\!1\!-\!\beta_j}{\bm\alpha\!+\!1\!-\!\beta_j}{\!-\frac{1}{\rho z}}\cdot \frac{\Gamma(\bmbeta{j}\!+\!1\!-\!\beta_j)}{\Gamma(\bm\alpha\!+\!1\!-\!\beta_j)},\qquad |\rho z|<1,\\
%&= e^{\pm\pi i(1-\beta_j)}\, G_{n,n}^{n,1}\hg{\beta_j,\,\bmbeta{j}}{\bm\alpha}{-\frac{1}{\rho z}}\cdot \frac{\Gamma(\bmbeta{j}\!+\!1\!-\!\beta_j)}{\Gamma(\bm\alpha\!+\!1\!-\!\beta_j)}, 
&\downarrow  \\
\tilde y_{1j}^+(z,\infty)&=z^{1-\beta_j}\,
G_{n-1,n}^{n,1}\hg{1,\,\bmbeta{n,j\!}\!+\!1\!-\!\beta_j}{\bm\alpha\!+\!1\!-\!\beta_j}{\!-\frac{1}{z}} \cdot \frac{\Gamma(\bmbeta{n,j\!}\!+\!1\!-\!\beta_j)}{\Gamma(\bm\alpha\!+\!1\!-\!\beta_j)},\quad |\arg z-\pi|<\frac{3\pi}{2},
%&=e^{\pm\pi i(1-\beta_j)}\, G_{n-1,n}^{n,1}\hg{\beta_j,\,\bmbeta{n,j}}{\bm\alpha}{-\frac{1}{z}} \cdot \frac{\Gamma(\bmbeta{n,j\!}\!+\!1\!-\!\beta_j)}{\Gamma(\bm\alpha\!+\!1\!-\!\beta_j)},
\end{align*}
with the limit asymptotic on compact sub-sectors to the divergent formal series
$$z^{1-\beta_j}\,{}_nF_{n-2}\hg{\bm\alpha\!+\!1\!-\!\beta_j}{\bmbeta{n,j\!}\!+\!1\!-\!\beta_j}{z}
=z^{1-\beta_j}\,\sum_{k=0}^{+\infty}\frac{(\bm\alpha\!+\!1\!-\!\beta_j)_k}{(\bmbeta{n,j}\!+\!1\!-\!\beta_j)_k(1)_k}z^k.$$
And the singular solution at $z=\frac{1}{\rho}$ is 
%$$\tilde y_{\cdot n}^+(z,\rho)= z^{-\rho}(z\!-\!\tfrac{1}{\rho})^{\gamma+\rho+n-1}\, \tilde t_{\cdot n}^+(z,\rho),$$ where the first component 
is given by
\begin{align*}
\tilde y_{1n}^+(z,\rho)
%&=z^{-\rho}(z\!-\!\tfrac{1}{\rho})^{\gamma+\rho}\,\sum_{k=0}^{+\infty}\frac{(-1)^k(\rho)^{k+n-1} c_k}{(\gamma+\rho+1)_{k+n-1}}(z-\tfrac{1}{\rho})^{k+n-1}\\
&= G_{n,n}^{n,0}\hg{\bm\beta}{\bm\alpha}{\frac{1}{\rho z}} \cdot\rho^{-\gamma} \,\Gamma(1\!+\!\gamma\!+\!\rho),\qquad |\rho z|>1\\
&\downarrow\\
\tilde y_{1n}^+(z,\infty)&= G_{n-1,n}^{n,0}\hg{\bmbeta{n}}{\bm\alpha}{\frac{1}{z}}, \qquad |\arg z|<\frac{3\pi}{2},
\end{align*}
which is asymptotic on compact sub-sectors to the divergent formal series
\begin{equation*}
e^{-\frac{1}{z}}z^{\gamma}\sum_{k=0}^{+\infty}(-1)^k c_k z^{k+n-1},
\qquad\text{with $c_k$ as in \eqref{eq:hg-ck}.}
\end{equation*}

The constructed fundamental matrix solution 
$\tilde Y^+=(\tilde y_{ij}^+)$ have the asymptotic behavior
$$\tilde Y^+(z,\rho)\sim \left(R+O(1)(z-\tfrac{B}{\rho})\right)(z-\tfrac{B}{\rho})^{\tilde A_D+\rho}z^{-\rho}.$$
Therefore
$$Y^{+\!}(z,\rho):=\tilde Y^{+\!}(z,\rho)R^{-1}\sim \left(I+O(1)(z-\tfrac{B}{\rho})\right)(z-\tfrac{B}{\rho})^{A_D+\rho}z^{-\rho}$$ 
is the corresponding Floquet bases of the confluent family \eqref{eq:hg-AB} with the right asymptotic behavior. 

In the definition of $\tilde Y^+$ above the right choice of branch of $\log z$
and $\log(z-\frac{1}{\rho})$ in $\tilde\Phi(z,\rho)=(z-\tfrac{B}{\rho})^{\tilde A_D+\rho}z^{-\rho}$ is of essential importance.
Let $\alpha$ be a direction, $-\pi<\alpha<0$, and chose the bases  $\tilde Y^\pm_{[\alpha]}$ and $\tilde Y^\pm_{[\alpha+\pi]}$ so that they are related to each other as in Figure~\ref{figure:hg-Y},
$$\tilde Y_{[\alpha+\pi]}^+(z,\rho)=(\rho z)^{-\rho}\tilde V^+(\rho z, \rho)\rho^{-\tilde A_D},\quad
\tilde Y_{[\alpha]}^-(z,\rho)=(\rho z)^{-\rho}\tilde V^-(\rho z, \rho)\rho^{-\tilde A_D}.$$

\smallskip

\begin{figure}[t]
\centering
\includegraphics[width=\textwidth]{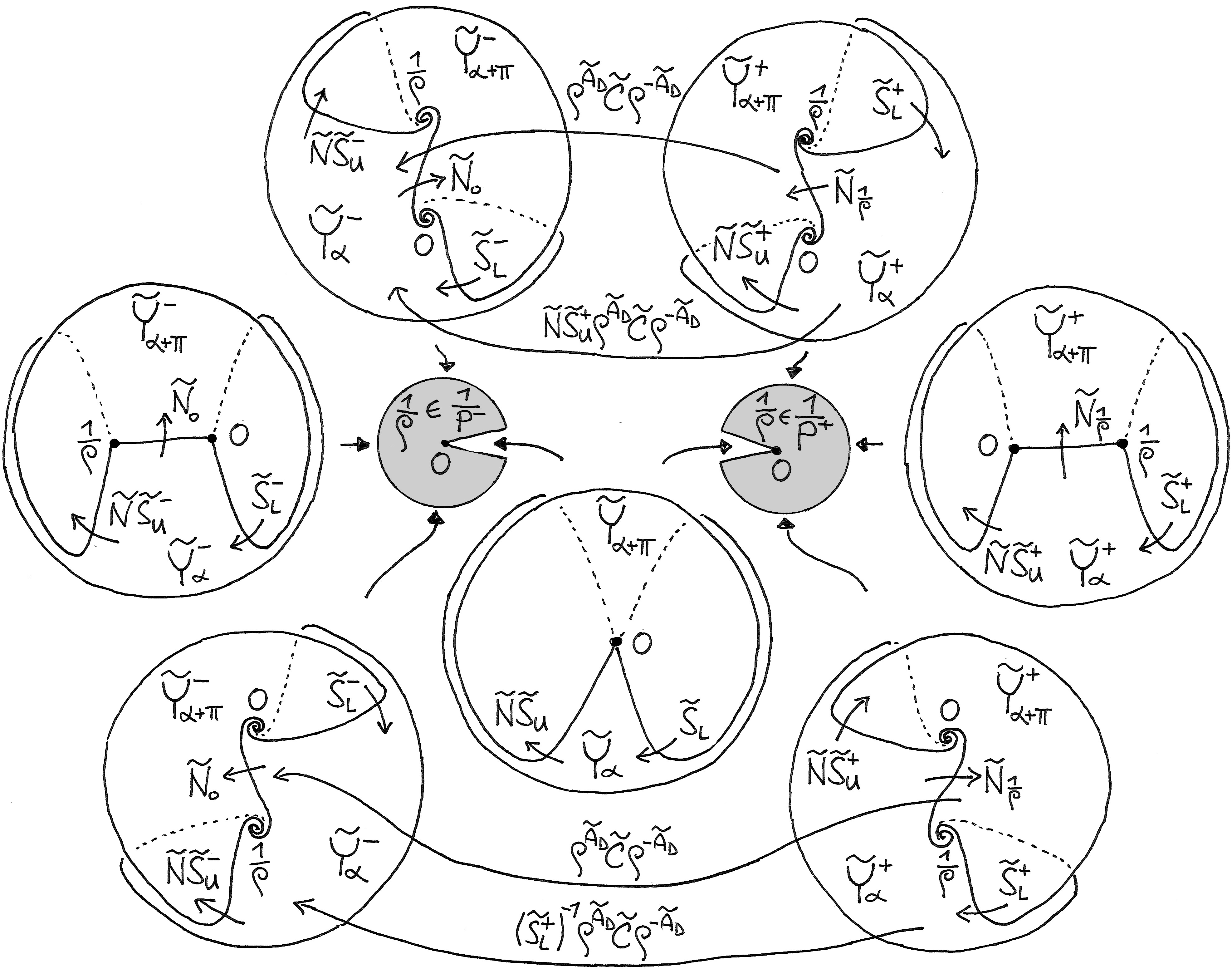}
\caption{The fundamental matrix solutions $\tilde Y^\pm_{[\alpha]}$,
$\tilde Y^\pm_{[\alpha+\pi]}$ on their natural domains (restricted to a fixed neighborhood of 0) and their transition matrices,
according to the values of $\frac{1}{\rho}$.
The limit system is in the center. (See \cite{LR2, HLR, Kl2} for more details on the construction of these ramified domains.)}
\label{figure:hg-Y}
\end{figure}

Then the monodromy matrices of the fundamental matrix solution 
$\tilde Y_{[\alpha+\pi]}^+(z,\rho)$,
resp. $\tilde Y_{[\alpha]}^-(z,\rho)$,  around 
$0$ and $\frac{1}{\rho}$  ($\rho\neq\infty$)  in the positive direction from a base-point at $z=\frac{1}{2\rho}$ 
 are equal 
\begin{equation}\label{eq:hg-M+}
\tilde M_0^+=\tilde N_0\tilde S_U^+
=e^{-2\pi i\rho}\rho^{\tilde A_D}\tilde m^+_0\rho^{-\tilde A_D},\qquad
\tilde M_{\frac{1}{\rho}}^+=\tilde S_L^+\tilde N_{\frac{1}{\rho}} 
=\rho^{\tilde A_D}\tilde m^+_1\rho^{-\tilde A_D},
\end{equation}
resp.
\begin{equation}\label{eq:hg-M-}
\tilde M_0^-=\tilde N_0\tilde S_L^-
=e^{-2\pi i\rho}\rho^{\tilde A_D}\tilde m^-_0\rho^{-\tilde A_D},\qquad
\tilde M_{\frac{1}{\rho}}^-=\tilde N\tilde S_U^-\tilde N_0^{-1} 
=\rho^{\tilde A_D}\tilde m^-_1\rho^{-\tilde A_D},
\end{equation} 
where 
\begin{equation*}
\tilde N_0=
\begin{pmatrix}
e^{2\pi i(1-\beta_1)} \hskip-18pt  &  & & \\[-3pt]
& \hskip-6pt\ddots &   & \\
 & &  e^{2\pi i(1-\beta_{n-1})} \hskip-24pt &  \\
 &  &  & e^{-2\pi i\rho}
\end{pmatrix},
\qquad
\tilde N_{\frac{1}{\rho}}=
\begin{pmatrix}
1  &   & & \\[-3pt] 
& \ddots &   & \\
 & & 1 &  \\
 &  &  &  e^{2\pi i(\gamma+\rho)}
\end{pmatrix},
\end{equation*}
and $\,\tilde N=\tilde N_0\tilde N_{\frac{1}{\rho}},\,$
are monodromies of the fundamental matrix solution $\tilde\Phi(z,\rho)=z^{-\rho}(z-\frac{B}{\rho})^{\tilde A_D+\rho}$ of the diagonal model system, and 
\begin{equation*}
\tilde S_U^\pm:=\tilde S_{\alpha+2\pi,\alpha+\pi}^\pm=
\begin{pmatrix}
1 &    & & \tilde s^\pm_{1n}\ \\[-3pt]
 & \hskip-3pt\ddots\hskip-3pt &   & \hskip3pt\vdots\ \\ 
 & & 1 \hskip-6pt & \tilde s^\pm_{n-1,n}\hskip-3pt \\[3pt]
  &  &  & 1 
\end{pmatrix},
\qquad
\tilde S_L^\pm:=\tilde S_{\alpha+\pi,\alpha}^\pm=
\begin{pmatrix}
1 &   & &  \\[-3pt]
 & \ddots &   & \\ 
 & & \hskip-6pt 1 & \\[3pt]
\tilde s^\pm_{n1} \hskip-6pt & \ldots & \hskip-6pt\tilde s^\pm_{n,n-1}\hskip-6pt  & 1
 \end{pmatrix}
\end{equation*}
are the Stokes matrices.
It follows from \eqref{eq:hg-M+} that the Stokes multipliers $\tilde s^+_{ij}(\rho)$ are equal to:
\begin{align*}
\tilde s^+_{jn}(\rho) 
&= \xi_j(\rho)\,(1\!-\!e^{-2\pi i(\rho+1-\beta_j)})\,\rho^{1-\beta_j-\gamma}\\
&=-2\pi i\, e^{\pi i(\gamma+\beta_j+n)} \frac{\Gamma(\beta_j\!-\!\bmbeta{n,j})}{\Gamma(\beta_j\!-\!\bm\alpha)} 
\cdot \rho^{1-\beta_j-\gamma} \frac{\Gamma(1\!+\!\gamma\!+\!\rho)}{\Gamma(2\!-\!\beta_j\!+\!\rho)},\\
&\downarrow\\
\tilde s^+_{jn}(\infty)&=-2\pi i\, e^{\pi i(\gamma+\beta_j+n)} \frac{\Gamma(\beta_j\!-\!\bmbeta{n,j})}{\Gamma(\beta_j\!-\!\bm\alpha)},  
\end{align*}
$\big(\,\text{since} \ \lim_{P^+\ni\rho\to\infty} \rho^{\gamma+\beta_j-1}\frac{\Gamma(2-\beta_j+\rho)}{\Gamma(1+\gamma+\rho)}=1 \,\big),$
and
\begin{align*}
\tilde s^+_{nj}(\rho) 
&=\eta_j(\rho)\,(e^{2\pi i(\gamma+\rho)}\!-\!1)\,\rho^{\gamma+\beta_j-1}\\
&= -2\pi i \frac{\Gamma(1\!-\!\beta_j\!+\!\bmbeta{n,j})}{\Gamma(1\!-\!\beta_j\!+\!\bm\alpha)}
\cdot \rho^{\gamma+\beta_j-1}\frac{\Gamma(2\!-\!\beta_j\!+\!\rho)}{\Gamma(1\!+\!\gamma\!+\!\rho)},\\
&\downarrow\\
\tilde s^+_{nj}(\infty)&=-2\pi i \frac{\Gamma(1\!-\!\beta_j\!+\!\bmbeta{n,j})}{\Gamma(1\!-\!\beta_j\!+\!\bm\alpha)}.
\end{align*}
From \eqref{eq:hg-M-} one then obtains the Stokes multipliers $\tilde s^-_{ij}(\rho)$:
\begin{align*}
\tilde s^-_{jn}(\rho) 
&=\xi_j(\rho)\, e^{-2\pi i(1-\beta_j+\rho)}(e^{2\pi i(\gamma+\rho)}\!-\!1)\,\rho^{1-\beta_j-\gamma}\\
&=-2\pi i\, e^{\pi i(\gamma+\beta_j+n)} \frac{\Gamma(\beta_j\!-\!\bmbeta{n,j})}{\Gamma(\beta_j\!-\!\bm\alpha)} 
\cdot \left(e^{- \pi i}\rho\right)^{1-\beta_j-\gamma} \frac{\Gamma(\beta_j\!-\!1\!-\!\rho)}{\Gamma(\!-\gamma\!-\!\rho)},\\
&\downarrow\\
\tilde s^-_{jn}(\infty)&=-2\pi i\, e^{\pi i(\gamma+\beta_j+n)} \frac{\Gamma(\beta_j\!-\!\bmbeta{n,j})}{\Gamma(\beta_j\!-\!\bm\alpha)},  
\end{align*}
$\big(\,\text{since}\ \lim_{P^-\ni\rho\to\infty} (e^{- \pi i}\rho)^{\gamma+\beta_j-1} \frac{\Gamma(-\gamma-\rho)}{\Gamma(\beta_j-1-\rho)}=1\,\big)$,
and
\begin{align*}
\tilde s^-_{nj}(\rho) 
&=\eta_j(\rho)\,(e^{2\pi i(1-\beta_j+\rho)}\!-\!1)\,\rho^{\gamma+\beta_j-1}\\
&= -2\pi i \frac{\Gamma(1\!-\!\beta_j\!+\!\bmbeta{n,j})}{\Gamma(1\!-\!\beta_j\!+\!\bm\alpha)}
\cdot \left(e^{- \pi i}\rho\right)^{\gamma+\beta_j-1} \frac{\Gamma(\!-\gamma\!-\!\rho)}{\Gamma(\beta_j\!-\!1\!-\!\rho)},\\
&\downarrow\\
\tilde s^-_{nj}(\infty)&=-2\pi i \frac{\Gamma(1\!-\!\beta_j\!+\!\bmbeta{n,j})}{\Gamma(1\!-\!\beta_j\!+\!\bm\alpha)}.
\end{align*}

One could also proceed the opposite way:
The Stokes matrices $\tilde S_\bullet(\infty)$, $\bullet=U,L$, of the limit generalized confluent hypergeometric equation have been calculated in \cite{DM, KO}, 
and the Stokes matrices  $\tilde S_\bullet^\pm(\rho)$ of the family are related to them via Proposition~\ref{prop:hg-stokesmatrices}.

Note that the confluent Floquet and co-Floquet bases $Y^\pm(z,\rho)=\tilde Y^\pm(z,\rho)R^{-1}$ and their monodromies, resp. Stokes matrices
$M_i^\pm(\rho)=R\tilde M_i^\pm(\rho)R^{-1}$,  $\iota=0,\frac{1}{\rho}$,
resp. $S_\bullet^\pm(\rho)=R\tilde S_\bullet^\pm(\rho)R^{-1}$,
$\bullet=U,L$,
are well-defined under a weaker assumption than \eqref{eq:hg-asump2}, that no two $\beta_j$'s differ by a non-zero integer.

\goodbreak

\small


\begin{thebibliography}{AAA}


\bibitem[Ba]{Ba} W. Balser, \textit{Formal power series and linear systems of meromorphic ordinary differential equations}, Springer, 2000. 		

\bibitem[BJL]{BJL} W. Balser, W.B. Jurkat, D.A. Lutz, \textit{On the reduction of connection problems with an irregular singularity to ones with only regular singularities, I, II}, SIAM J. Math. Anal. \textbf{12} (1981), 691--721,
SIAM J. Math. Anal. \textbf{19} (1988), 398--443.


%\bibitem[BJL]{BJL} W. Balser, W.B. Jurkat, D.A. Lutz, \textit{Birkhoff Invariants and Stokes' Multipliers for Meromorphic Linear Differential Equations},  J. Math. Anal. Appl. \textbf{71} (1979), 48--94.

%\bibitem[BD]{BD} B. Branner, K. Dias, \textit{Classification of complex polynomial vector fields in one complex variable}, J. Diff. Eq. Appl., \textbf{16} (2010), 463--517. 

%\bibitem[BV]{BaVa}  D.G. Babbitt, V.S. Varadarajan, \textit{Local moduli for meromorphic differential equations}, Astérisque \textbf{169-170} (1989).

\bibitem[Du]{Du} A. Duval, \textit{Confluence procedures in the generalized hypergeometric family}, J. Math. Sci. Univ. Tokyo \textbf{5} (1998), 597--625.

%\bibitem[Du]{Du} A. Duval, \textit{Biconfluence et groupe de Galois}, J. Fac. Sci. Univ. Tokyo, Sect. IA, Math. \textbf{38} (1991), 211--223.

\bibitem[DM]{DM} A. Duval, C. Mitschi, \textit{Matrices de Stokes et groupe de Galois des équations hypergéométriques généralisées}, Pacific J. of Math. \textbf{138} (1989), 25--56.


%\bibitem[DES]{DES} A. Douady, F. Estrada, P. Sentenac, \textit{Champs de vecteurs polynômiaux sur $\C$}, unpublished manuscript, (2005).

%\bibitem[Gan]{Gan} F.R. Gantmacher, \textit{The theory of matrices}, New York, 1959.

\bibitem[Gar]{Gar} R. Garnier, \textit{Sur les singularités irrégulières des équations différentielles linéaires}, J. de  math. pures et appl. $8^e$ série (1919), 99--200.

\bibitem[Gl1]{Gl1} A. Glutsyuk, \textit{Stokes Operators via Limit Monodromy of Generic Perturbation}, J. Dyn. Control Syst. \textbf{5} (1999), 101--135.

\bibitem[Gl2]{Gl2}  A. Glutsyuk, \textit{Resonant Confluence of Singular Points and Stokes Phenomena}, J. Dyn. Control Syst. \textbf{10} (2004), 253--302.	

\bibitem[Hu]{Hu} M. Hukuhara, \textit{Développements Asymptotiques des Solutions Principales d'un Systeme Différentiel Linéaire du Type Hypergéométrique}, Tokyo J. of Math. \textbf{05} (1982), 491--499.

\bibitem[HLR]{HLR} J. Hurtubise, C. Lambert, C. Rousseau, \textit{Complete system of analytic invariants for unfolded differential linear systems with an irregular singularity of Poincaré rank k}, Moscow Math. J. \textbf{14} (2013), 309--338. 

%\bibitem[IY]{IlYa} Iwasaki K., Kimura H., Shimomura S., Yoshida M., \textit{From Gauss to Painlevé: a modern theory of special functions}, Vieweg, 1991.	

\bibitem[IY]{IY} Y. Ilyashenko, S. Yakovenko, \textit{Lectures on Analytic Differential Equations}, Grad. Studies Math. \textbf{86}, Amer. Math. Soc., Providence, 2008.	

%\bibitem[JLP1]{JLP1}  W.B. Jurkat, D.A. Lutz, A. Peyerimhoff, \textit{Birkhoff invariants and effective calculations for meromorphic linear differential equations I}, J. Math. Anal. Appl. \textbf{53} (1976), 	438--470.

%\bibitem[JLP2]{JLP2} W.B. Jurkat, D.A. Lutz, A. Peyerimhoff, \textit{Birkhoff invariants and effective calculations for meromorphic linear differential equations II}, Houston J. Math. \textbf{2} (1976), 			207--238.

%\bibitem[KT]{KT} H. Kimura, K. Takano, \textit{On confluences of general hypergeometric systems}, Tohoku Math. J.  \textbf{58} (2006), 1--31.

\bibitem[Kl1]{Kl1} M. Klime\v{s}, \textit{Analytic classification of families of linear differential systems unfolding a resonant irregular singularity}, preprint arXiv:1301.5228.

\bibitem[Kl2]{Kl2} M. Klime\v{s}, \textit{Confluence of singularities of non-linear differential equations via Borel-Laplace transformations}, to be published in J. Dynam. Contr. Syst. [arXiv:1307.8383]

\bibitem[Ko1]{Ko1} M. Kohno, \textit{Frobenius' theorem and Gauss-Kummer's formula}, Funkcialaj Ekvacioj \textbf{28} (1985), 249--266.

\bibitem[Ko2]{Ko2} M. Kohno, \textit{Global analysis in linear differential equations}, Math. and its appl. \textbf{471}, Kluwer Acad. Publ., Dodrecht, 1999.

\bibitem[KO]{KO} M. Kohno, S. Ohkohchi, \textit{Generalized hypergeometric equations of non-Fuchsian type}, Hiroshima Math. J. \textbf{13} (1983), 83--100.

%\bibitem[Ko]{Ko} V.P. Kostov, \textit{Normal forms of unfoldings of non-fuchsian systems}, C. R. Acad. Sc. Paris \textbf{318} (1994), 623--628.

\bibitem[Le]{Le} A.H.M. Levelt, \textit{Hypergeometric functions}, dissertation, Nederl. Akad. Wetensch. Proc. Ser. A \textbf{64}, 1961.

\bibitem[LR1]{LR1}  C. Lambert, C. Rousseau, \textit{The Stokes phenomenon in the confluence of the hypergeometric equation using Riccati equation}, J. Differential Equations \textbf{244} (2008) 2641-–2664.

\bibitem[LR2]{LR2}  C. Lambert, C. Rousseau, \textit{Complete system of analytic invariants for unfolded differential linear systems with an irregular singularity of Poincaré rank 1}, Moscow Math. Journal \textbf{12} (2012), 77--138.

\bibitem[Lu]{Lu} Y.L. Luke, \textit{Special functions and their approximations I, II}, Math. Sci. Eng. \textbf{53}, Acad. Press, New York, 1969.

%\bibitem[Ma]{Ma} B. Malgrange, \textit{Sommation des séries divergentes}, Expositiones Mathematicae \textbf{13} (1995), 163--222.

\bibitem[MR]{MR} J. Martinet, J.-P. Ramis, \textit{Théorie de Galois differentielle et resommation}, in: \textit{Computer Algebra and Differential Equations} (E.Tournier ed.), Acad. Press (1988).

\bibitem[Ok1]{Ok1} K. Okubo, \textit{An Extension of Gauss' Formula for Hypergeometric Series}, \begin{CJK}{UTF8}{min} 数理解析研究所講究録 \end{CJK} \textbf{105} (1971), 53--57.
[http://hdl.handle.net/2433/106318]

\bibitem[Ok2]{Ok2} K. Okubo, \textit{Connection problems for systems of linear differential equations}, Lect. Notes in Math. \textbf{243}, Springer, 1971.

\bibitem[OTY]{OTY} K. Okubo, K. Takano, S. Yoshida, \textit{A connection problem for the generalized hypergeometric equation}, Funkcialaj Ekvacioj \textbf{31} (1988), 483--495.

\bibitem[Pa]{Pa}  L. Parise, \textit{Confluence de singularités régulières d'équations différentielles en une singularité irrégulière. Modèle de Garnier}, thèse de doctorat, IRMA Strasbourg (2001). [http://www-irma.u-strasbg.fr/annexes/publications/pdf/01020.pdf]

\bibitem[Ra]{Ra} J.-P. Ramis, \textit{Confluence et résurgence}, J. Fac. Sci. Univ. Tokyo, Sec. IA Math. \textbf{36} (1989), 703--716. 

\bibitem[Sch1]{Sch1}  R. Sch\"afke, \textit{\"Uber das globale analytische Verhalten der Normall\"osungen von $(s-B)v'(s)=(B+t^{-1}A)v(s)$ und zweier Arten von assoziierten Funktionen}, Math. Nachr. \textbf{121} (1985), 123--145.

\bibitem[Sch2]{Sch2}  R. Sch\"afke, \textit{Confluence of several regular singular points into an irregular singular one}, J. Dyn. Control Syst. \textbf{4} (1998), 401--424.

%\bibitem[Si]{Sib} Y. Sibuya, \textit{Linear differential equations in the complex domain : problems of analytic continuation}, Translations of Mathematical Monographs \textbf{82}, American Mathematical 			Society, Providence, 1990.

%\bibitem[Wa]{Wa} W. Wasow, \textit{Asymptotic Expansions for Ordinary Differential Equations}, John Wiley and Sons Inc., 1966.

\bibitem[Zh]{Zh} C. Zhang, \textit{Confluence et phenomène de Stokes}, J. Math. Sci. Univ. Tokyo \textbf{3} (1996), 91--107.

\end{thebibliography}
\end{document}